\newtheorem{definition}{Definition}[section]
\newtheorem{lemma}[definition]{Lemma}
\newtheorem{proposition}[definition]{Proposition}
\newtheorem{example}[definition]{Example}
\newtheorem{corollary}[definition]{Corollary}
\newtheorem{theorem}[definition]{Theorem}
\newtheorem*{proposition*}{Proposition}
\newtheorem*{opQ}{Question}
\theoremstyle{remark}
\newcommand{\interior}{\mathrm{int}\,}
\newcommand{\reInt}{\mathrm{ri}\,}
\newcommand{\SOC}[2]{{\mathcal{L}^{#2} _{#1}}}
\newcommand{\norm}[1]{\lVert{#1}\rVert}
\newcommand{\inProd}[2]{\langle #1 , #2 \rangle }
\newcommand{\cS}{\mathcal{S}}
\newcommand{\psdcone}[1]{\cS_+^{#1}}
\newcommand{\pdcone}[1]{\cS_{++}^{#1}}
\newcommand{\RR}{\mathbb{R}}
\newcommand{\rank}{\operatorname{rank}}
\newcommand{\Aut}{\operatorname{Aut}}
\newcommand{\tr}{\textup{tr}}
\newcommand{\stdCone}{ {\mathcal{K}}}
\newcommand{\ambSpace}{ \mathcal{V} }
\renewcommand{\Re}{\mathbb{R}}   
\title{Minimal hyperbolic polynomials and ranks of homogeneous cones}
\author{
	Jo\~ao Gouveia\thanks{CMUC, Department of Mathematics, University of Coimbra, 3001-454 Coimbra, Portugal.
		During part of this work, the author also held a visiting professorship at the Institute of Statistical Mathematics, Japan.
		This author was supported by the Centre for Mathematics of the University of Coimbra (UIDB/00324/2020,
		funded by the Portuguese Government through FCT/MCTES).
		Email: \href{jgouveia@mat.uc.pt }{jgouveia@mat.uc.pt}.} \and
	Masaru Ito		\thanks{Department of Mathematics, College of Science and Technology, Nihon University,
		1-8-14 Kanda-Surugadai, Chiyoda-Ku, Tokyo 101-8308, Japan. This author was supported partly by the JSPS Grant-in-Aid for Early-Career Scientists  21K17711.
		Email: \href{ito.masaru@nihon-u.ac.jp}{ito.masaru@nihon-u.ac.jp}.}
	\and
	Bruno F. Louren\c{c}o\thanks{Department of Fundamental Statistical Mathematics, Institute of Statistical Mathematics, Japan.
		This author was supported partly by the JSPS Grant-in-Aid for Early-Career Scientists  23K16844 and the Grant-in-Aid for Scientific Research (B) JP21H03398.
		Email: \href{bruno@ism.ac.jp}{bruno@ism.ac.jp}.}
}
\numberwithin{equation}{section}
\begin{document}	
\maketitle
\begin{abstract}
The starting point of this paper is the computation of 
minimal hyperbolic polynomials of duals of cones arising from chordal sparsity 
patterns. From that, we investigate the relation between ranks of homogeneous cones and 
their minimal polynomials. Along the way, we answer in the negative a question posed in 
an earlier paper and show examples of homogeneous cones that cannot be realized as rank-one generated (ROG) hyperbolicity cones.\newline

\emph{Dedicated to Prof.~R.T.~Rockafellar on the occasion of his 90th birthday.}
\end{abstract}

\section{Introduction}\label{sec:int}
 Hyperbolic polynomials are intrinsically connected with many important classes of optimization problems such as linear programs, semidefinite programs and, more generally, symmetric cone programming.  

We recall that if $p: \ambSpace \to \Re$ is a homogeneous polynomial of degree $d$ over a real finite-dimensional Euclidean space $\ambSpace$, then we say that it is \emph{hyperbolic along $e$} if $p(e)>0$ and for every $x \in \ambSpace$ the one-variable polynomial $t \mapsto p(x-te)$ only has real roots.
With that, the corresponding \emph{hyperbolicity cone} is 
\begin{equation}
\label{eq:hypcone}
\Lambda_+(p,e) \coloneqq \{x\in \ambSpace \mid \textup{all roots of $t\mapsto p(te-x)$ are nonnegative}\}.
\end{equation}
For example, if $p: \Re^n \to \Re$ is such that $p(x_1,\ldots, x_n) = \Pi_{i=1}^n x_i$ and $e = (1,\ldots, 1)$ then 
$\Lambda_+(p,e) = \Re^n_+$.
Similarly, over the real symmetric matrices $\cS^n$, we 
have $\Lambda_+(\det,I_n) = \psdcone{n}$, where $I_n$ is the $n\times n$ identity matrix, $\det: \cS^n \to \Re$ is the determinant polynomial and $\psdcone{n}$ is the cone of $n\times n$ real symmetric positive semidefinite matrices.


Homogeneous cones and symmetric cones are hyperbolicity cones, but that is not the end of the story. One key point is that if a given $\stdCone$ can be written as a hyperbolicity cone $\Lambda_+(p,e)$, there could be multiple hyperbolic polynomials that generate $\stdCone$. 
For example, we have $\Re^3_+ = \Lambda_+(p^*, e)$, where 
$p^*(x_1,x_2,x_3) = x_1^2x_2x_3$ even though $\Re^3$ is also generated by the degree $3$ polynomial $x_1x_2x_3$.

Given $\stdCone$, a natural question then is how to find a minimal polynomial (i.e., a polynomial of minimal degree) that generates $\stdCone$. Fortunately, such a question is well-defined, because two minimal polynomials for the same hyperbolicity cone must differ by a constant, see \cite[Lemma~2.1]{HV07}.
Unfortunately, such a question is also notoriously hard to answer, see Section~\ref{sec:minpoly} where we overview some 
facts on this topic.

At first glance, finding minimal polynomials might seem an esoteric task, but it is actually filled with practical implications. 
G\"uler proved that $-\log p$ is a logarithmically homogeneous self-concordant barrier for the interior of $\Lambda_{+}(p,e)$ with barrier parameter $d = \deg p$, see \cite[Section~4]{Gu97}. 
By its turn, the barrier parameter can be used to upper bound the worst-case iteration complexity of interior-point methods.
The summary is that  using a polynomial of smaller degree may lead to improved complexity properties. 

While the minimal polynomials of symmetric cones are known, 
there seems to be several gaps in our knowledge of minimal polynomials of homogeneous cones and general hyperbolicity cones.

Given a homogeneous cone $\stdCone$, 
G\"uler showed in \cite[Section~8]{Gu97} how to construct a hyperbolic polynomial $p$ such 
that $\stdCone = \Lambda_{+}(p,e)$ holds for some $e$. Unfortunately, 
his construction (which was based on results of Gindikin \cite{Gi92}) does not produce minimal polynomials even when $\stdCone = \psdcone{n}$ so, the resulting self-concordant barrier may fail to be optimal with respect to the barrier parameter.
However, in \cite[Section~4.2]{CT06},  Chua and Tun\c{c}el discuss a general construction based on semidefinite representations of homogeneous cones which, in particular, recovers optimality when specialized to $\stdCone = \psdcone{n}$.
Having access to a semidefinite representation of a homogeneous cone $\stdCone$ also makes it possible to compute a hyperbolic polynomial that generates $\stdCone$ and we will revisit this point in the proof of Proposition~\ref{prop:sdp_representation}. 
In general, however, there seems to be no guarantee that such polynomial is minimal.

As mentioned previously, part of the difficulty is that it is hard to attest that a given hyperbolic polynomial is  minimal. 
However, there is one important exception to this: the so-called \emph{rank-one generated (ROG) hyperbolicity cones}, which were studied in \cite{IL23}.
A hyperbolicity cone $\Lambda_{+}(p,e)$ is ROG if its extreme rays are generated by rank-$1$ elements, where the rank of a given $x$ is the number of nonzero roots  of the polynomial $t\mapsto p(x - te)$.

If a pointed full-dimensional hyperbolicity cone $\Lambda_{+}(p,e)$ is ROG (with respect to $p$) then $p$ must be a minimal polynomial for $\Lambda_{+}(p,e)$, see \cite[Proposition~3.5]{IL23}. 
This helps bring some geometry into what was an otherwise purely algebraic question.
With that, the following question was formulated in \cite[Section~5]{IL23}.
\begin{quotation}
	Can homogeneous cones be realized as ROG hyperbolicity cones?
\end{quotation}
Unfortunately, as we will see in this paper the answer is \emph{no}. 
We will exhibit a homogeneous cone that cannot be realized as a ROG hyperbolicity cone. 

We also recall that there is a notion of rank for homogeneous cone. 
Although it is known that a symmetric cone has a minimal polynomial equal to its rank, we will also observe that this is no longer true for homogeneous cones. 
Indeed, there exists a homogeneous cone of rank $3$ whose minimal polynomial is $4$, see Section~\ref{sec:hrank}. More generally, we will present a result on the degrees of the minimal polynomials for cones arising as duals of PSD cones with chordal sparsity patterns.

Here is a summary of our results.
\begin{itemize}
	\item We provide minimal polynomials for the duals of PSD cones with chordal sparsity patterns, see Theorem~\ref{th:minpoly-chordal}. Then, based on this result, we furnish several consequences for hyperbolicity and homogeneous cones. 
	Among the PSD cones with chordal sparsity patterns, if an additional condition is satisfied, these cones become homogeneous. In combination with 
	Theorem~\ref{th:minpoly-chordal}, this allow us to find several examples of homogeneous cones that cannot be realized as ROG hyperbolicity cones, see Theorem~\ref{theo:chord_dual_rog} and Section~\ref{sec:hcone}.
	
	\item We present a discussion on the connection of homogenous cone rank and the degree of minimal polynomials for homogeneous cones. 
	In particular, if the $\stdCone$ is the dual of a cone that arises from a homogeneous chordal sparsity pattern, then the degree of minimal polynomials is no more than $O(n^2)$ where $n$ is the homogeneous cone rank.
	We also present some results for general homogeneous cones, but they are more limited in scope. 
For example, we discuss an exponential upper bound for the degree of minimal polynomials for a general homogeneous cone in terms of its rank, see Proposition~\ref{prop:exp_bound}.	
\end{itemize}

This work is organized as follows. In Section~\ref{sec:prel} we recall some basic facts about semialgebraic and hyperbolicity cones. 
In Section~\ref{sec:chordal}, we present a discussion on the dual of cones arising from chordal sparsity patterns. Finally, in Section~\ref{sec:hyp_hcone} we have results on ROG hyperbolicity cones and minimal polynomials of homogeneous cones.

\section{Preliminaries}\label{sec:prel}
We recall some basic nomenclature on convex sets that will be needed in the sequel, see \cite{RT97} for more details.
Let $\stdCone$ be a closed convex cone $\stdCone$ contained in a finite dimensional Euclidean space $\ambSpace$. $\stdCone$ is said to be \emph{pointed} if $\stdCone \cap - \stdCone = \{0\}$ and it is said to be \emph{full-dimensional} if the linear span of $\stdCone$ is $\ambSpace$. We denote the relative interior of $\stdCone$ by $\reInt \stdCone$. 

\subsection{Semialgebraic cones and minimal polynomials}\label{sec:minpoly}

We say a set $S \subseteq \Re^n$ is \emph{semialgebraic} if it can be written as a finite union of sets of the form
$$\{x \in \Re^n~|~p_i(x)\geq 0~(i=1,\ldots,k),~q_j(x)>0~(j=1,\ldots,l)\},$$
for some polynomials $p_i(x),q_j(x)$ of real coefficients.
We will refer by \emph{semialgebraic cone} to any convex cone that is also a semialgebraic set. 

\begin{example}
In convex analysis, this class includes many important examples: the nonnegative orthant $\RR_+^n$, the cone of $n\times n$ real symmetric positive semidefinite matrices $\psdcone{n}$, the copositive cone $\textup{CoP}^n$ and the completely positive cone $\textup{CP}^n$, second order cones, hyperbolicity cones, etc.
A non-example would be the power cone ${{{\cal P}^{^{\bm{\alpha}}}_{m,n}}}$ for irrational $\alpha$ and $n \geq 2$ \cite{Cha09,LLLP23}.
\end{example}

Semialgebraic cones are very special semialgebraic sets, that come with some particularly useful algebraic properties. Most importantly we have the following fact.

\begin{proposition}\label{prop:bound_hyper}
Let $C$ be a closed semialgebraic cone with non-empty interior. Then the algebraic closure of its boundary is a hypersurface. This means that there exists (up to multiplication by a scalar) a unique polynomial of minimal degree that vanishes on the boundary of $C$, and any other polynomial vanishing on the boundary must be a multiple of it.
\end{proposition}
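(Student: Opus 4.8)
The plan is to identify the desired minimal polynomial with a generator of the real vanishing ideal of $\partial C$ and then to exploit unique factorization in $\RR[x_1,\dots,x_n]$, where $n = \dim \ambSpace$. Throughout I write $I(\partial C)$ for the ideal of polynomials that vanish on $\partial C$, $V_{\RR}$ and $V_{\CC}$ for real and complex zero sets, and I recall that the Zariski closure of $\partial C$ is precisely $V_{\RR}(I(\partial C))$; thus proving the second assertion amounts to showing that $I(\partial C)$ is principal.

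First I would record the geometric input. Since $C$ is a proper closed convex cone with nonempty interior, its boundary $\partial C$ is nonempty, is semialgebraic (boundaries of semialgebraic sets are semialgebraic), and is \emph{pure} of dimension $n-1$: radial projection from a point of $\interior C$ exhibits $\partial C$ as homeomorphic to an open subset of the unit sphere, so every nonempty relatively open subset of $\partial C$ has dimension $n-1$. In particular $\partial C$ is not Zariski dense, so $I(\partial C)\neq 0$; let $q$ be a polynomial of minimal degree in it. Because the real zero set of $q$ equals that of its squarefree part and the squarefree part has no larger degree, minimality forces $q$ to be squarefree, and I write $q = q_1\cdots q_r$ as a product of pairwise nonassociate irreducible factors.

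Next I would show each factor is genuinely needed. If some $q_i$ could be dropped, then $\prod_{j\neq i} q_j$ would vanish on $\partial C$ with strictly smaller degree, contradicting minimality. Hence the set $S_i := \partial C \cap \{x : q_j(x)\neq 0 \text{ for all } j\neq i\}$ is nonempty; it is relatively open in $\partial C$, and on it $q_i = 0$ automatically, so $S_i\subseteq V_{\RR}(q_i)$. By purity, $\dim S_i = n-1$, so $V_{\RR}(q_i)$ has codimension one. A real-irreducible polynomial whose real zero set has codimension one must also be irreducible over $\CC$: otherwise $q_i = g\bar g$ for a non-real $g$, whose real locus $\{g=0\}$ has codimension at least two. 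Thus each $V_{\CC}(q_i)$ is an irreducible complex hypersurface. Finally, given any $f$ vanishing on $\partial C$, it vanishes on each $S_i$; since $\dim_\RR S_i = n-1$, the complex Zariski closure of $S_i$ has complex dimension $n-1$ and lies in the irreducible variety $V_{\CC}(q_i)$, hence equals it. So $f$ vanishes on $V_{\CC}(q_i)$, and the Nullstellensatz (with $(q_i)$ prime) gives $q_i \mid f$. As the $q_i$ are pairwise nonassociate irreducibles, $q = \prod_i q_i \mid f$, which proves $I(\partial C)=(q)$. Consequently $V_{\RR}(q)$ is the algebraic closure of $\partial C$ and is a hypersurface, every vanishing polynomial is a multiple of $q$, and uniqueness up to a scalar follows because a second minimal-degree generator divides $q$ and shares its degree.

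The steps I expect to be the main obstacle are the two facts I would isolate or cite from real algebraic geometry: the agreement between the semialgebraic dimension of a set and the (real, hence complex) dimension of its Zariski closure, and the passage from real to complex irreducibility for codimension-one real zero sets. Purity of $\partial C$ is the geometric ingredient that makes each $S_i$ full-dimensional, and it is the one place where convexity of $C$ is essential rather than mere semialgebraicity.
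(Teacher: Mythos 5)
Your proof is correct, but it takes a genuinely different route from the paper: the paper does not prove this proposition itself, instead citing Lemma~2.5 of Sinn's paper on algebraic boundaries, which gives the statement for any semialgebraic set $S$ such that both $S$ and its complement are \emph{regular} (contained in the closure of their interiors), and then noting that a convex semialgebraic set $C \subsetneq \Re^n$ with nonempty interior and its complement are always regular. Your argument is a self-contained specialization of that machinery: where the cited lemma uses regularity of the set and its complement to force the algebraic boundary to be pure of codimension one, you derive purity of $\partial C$ directly from convexity via radial projection onto the sphere, and then run the standard UFD/Nullstellensatz argument --- squarefreeness of a minimal-degree $q \in I(\partial C)$, nonemptiness and relative openness of the patches $S_i$, real-to-complex irreducibility of a real-irreducible factor whose real zero set has codimension one, and $\overline{S_i}^{\mathrm{Zar}} = V_{\CC}(q_i)$ --- to conclude $I(\partial C)=(q)$, which yields both the hypersurface claim and the divisibility claim. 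What your approach buys is transparency and independence from the citation; what the paper's buys is brevity and greater generality (regular semialgebraic sets rather than convex cones). Three small points to tighten: you implicitly assume $C \subsetneq \Re^n$ so that $\partial C \neq \emptyset$, which is harmless since the paper makes the same implicit assumption in the surrounding discussion; you should note that the radial projection is a \emph{semialgebraic} homeomorphism (the norm is a semialgebraic function), since semialgebraic dimension is invariant only under semialgebraic homeomorphisms; and in the step $q_i = g\bar g$ you should record why $g$ and $\bar g$ are non-associate --- if $\bar g$ were a unit multiple of $g$, then a suitable scalar multiple of $g$ would be a real polynomial and $q_i$ would be reducible (indeed a square up to constant) over $\RR$ --- as this coprimality is what makes $\{g=0\} \cap \RR^n = V_{\CC}(g,\bar g) \cap \RR^n$ have codimension at least two.
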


The statement of Proposition \ref{prop:bound_hyper} is in fact valid for any semialgebraic set $S$ such that $S$ and its complement are \emph{regular} and non-empty. Here, a regular set is one that is contained in the closure of its interior. For a proof see Lemma~2.5 of \cite{sinn2015algebraic}. We refer to that paper for a thorough survey on the algebraic theory of boundaries of convex sets.
In particular, a convex semialgebraic set $C \subsetneq \Re^n$ with non-empty interior and its complement are always regular.   \medskip

The unique polynomial (up to scaling) that Proposition \ref{prop:bound_hyper} associates to a   closed  semialgebraic cone  $C$ with non-empty interior is called the \emph{minimal polynomial} of $C$.

\begin{example} Examples of minimal polynomials of semialgebraic cones.
\begin{enumerate}

\item For $C=\RR_+^n$ the minimal polynomial is $x_1x_2...x_n$. It vanishes on the boundary and any polynomial that divides it does not.

\item For $C=\psdcone{n}$ the minimal polynomial is $\det(X)$. To see this it is enough to note that it vanishes on the boundary and that it is irreducible, so it cannot be a multiple of any other polynomial.

\item For $C=\textup{CP}^n$ the minimal polynomial is very hard to compute. In \cite{pfeffer2021cone} the case $n=5$ is partially answered. In particular a divisor of the minimal polynomial is computed and it has degree 3900, see Theorem~2.13 therein.
 \end{enumerate}  
\end{example}

The next lemma will be useful to argue about minimal polynomials of intersections of cones.
Here, we say that a polynomial $p$ is a \emph{strict divisor} of a polynomial $q$ if $q = ph$ holds for some polynomial $h$ of degree $\geq 1$.

\begin{lemma}\label{lemma:minpoly-prod}
Let $C_1,\ldots,C_k \subsetneq \Re^n$ be closed semialgebraic cones with minimal polynomials $p_1,\ldots,p_k$, respectively. Suppose that the interiors of the $C_i$ intersect and the $p_1,\ldots,p_k$ are irreducible. 
Then, $p_1\cdots p_k$ is a minimal polynomial of $C_1\cap \cdots \cap C_k$ if and only if any strict divisor of $p_1\cdots p_k$ does not vanish entirely on $C_1\cap \cdots \cap C_k$.
\end{lemma}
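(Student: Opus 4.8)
Write $C \coloneqq C_1 \cap \cdots \cap C_k$ and $P \coloneqq p_1\cdots p_k$. The plan is to reduce everything to Proposition~\ref{prop:bound_hyper} applied to $C$, after trapping the boundary of $C$ inside the zero set of $P$. First I would verify that the hypotheses of that proposition hold for $C$: as a finite intersection of closed semialgebraic convex cones, $C$ is again a closed semialgebraic convex cone, and it is proper since $C \subseteq C_1 \subsetneq \Re^n$; moreover, because the interiors of the $C_i$ meet, a common interior point has a ball around it lying in every $C_i$ and hence in $C$, so $\interior C \ne \emptyset$. Thus $C$ has a well-defined minimal polynomial $q$ (unique up to scaling), and I read the phrase ``vanishes entirely on $C_1\cap\cdots\cap C_k$'' as ``vanishes on the whole boundary $\partial C$'', which is the condition governing minimal polynomials.

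The key geometric step is the containment $\partial C \subseteq \bigcup_{i=1}^k \partial C_i$, which I would argue contrapositively: if $x \in C$ lies on no $\partial C_i$, then since each $C_i$ is closed and contains $x$ we get $x \in \interior C_i$ for every $i$, so $x$ lies in the open set $\bigcap_i \interior C_i \subseteq C$; hence $x \in \interior C$ and $x \notin \partial C$. Since the minimal polynomial $p_i$ of $C_i$ vanishes on $\partial C_i$, the product $P$ vanishes on $\bigcup_i \partial C_i$ and therefore on $\partial C$. Proposition~\ref{prop:bound_hyper} then forces every polynomial vanishing on $\partial C$ to be a multiple of $q$, so in particular $q \mid P$.

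With $q \mid P$ and $q$ vanishing on $\partial C$ in hand, the equivalence is a short divisibility argument. If $P$ is minimal, then $q$ and $P$ are associates, so any strict divisor $d$ of $P$ satisfies $\deg d < \deg P = \deg q$; were $d$ to vanish on $\partial C$, Proposition~\ref{prop:bound_hyper} would give $q \mid d$, contradicting the degree inequality, so no strict divisor vanishes on $\partial C$. Conversely, if no strict divisor of $P$ vanishes on $\partial C$, then since $q$ does vanish on $\partial C$ and $q \mid P$, the divisor $q$ cannot be strict, i.e.\ $q$ is an associate of $P$, which is exactly the statement that $P$ is a minimal polynomial of $C$.

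Finally, I would note where irreducibility of the $p_i$ earns its keep. It is not needed for the equivalence itself, but through unique factorization it makes the criterion effective: since each $p_i$ is irreducible, any strict divisor $d$ of $P$ divides one of the $k$ maximal subproducts $P/p_j = \prod_{i\ne j} p_i$, and if $d$ vanishes on $\partial C$ then so does the multiple $P/p_j$. Hence ``some strict divisor of $P$ vanishes on $\partial C$'' is equivalent to ``$P/p_j$ vanishes on $\partial C$ for some $j$'', so in practice the test need only be checked against these $k$ candidates. The only genuinely delicate points in the whole argument are the topological bookkeeping for interiors and boundaries of the convex intersection and the correct reading of ``vanishes entirely''; the algebraic core is an immediate consequence of Proposition~\ref{prop:bound_hyper}.
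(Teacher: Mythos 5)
Your proof is correct and takes essentially the same route as the paper's: show that $p_1\cdots p_k$ vanishes on the boundary of $C \coloneqq C_1\cap\cdots\cap C_k$, invoke Proposition~\ref{prop:bound_hyper} to conclude that the minimal polynomial $q$ of $C$ divides the product, and finish by the strict-divisor hypothesis; your reading of ``vanishes entirely on $C_1\cap\cdots\cap C_k$'' as vanishing on $\partial C$ is indeed the intended one (the literal reading is vacuous, since $C$ has nonempty interior). Your minor refinements are both sound and slightly sharpen the paper's argument: you make explicit the containment $\partial C\subseteq\bigcup_i\partial C_i$ (left implicit in the paper), and you correctly observe that irreducibility is dispensable for the equivalence itself --- the paper uses it, via unique factorization, to write $q=\alpha\prod_{j\in J}p_j$, whereas your dichotomy ``$q$ is a strict divisor or an associate of $p_1\cdots p_k$'' suffices, with irreducibility only needed to reduce the criterion to the $k$ subproducts $\prod_{i\neq j}p_i$, which is exactly how the lemma gets applied in Theorem~\ref{th:minpoly-chordal}.
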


\begin{proof}
The necessity is clear by the minimality of $p_1\cdots p_k$. To show the sufficiency, let $q$ be a minimal polynomial of $C_1\cap \cdots \cap C_k$. Since $p_1\cdots p_k$ vanishes on the boundary of $C_1\cap \cdots \cap C_k$, $q$ divides $p_1\cdots p_k$ (Proposition~\ref{prop:bound_hyper}). As $\RR[x]$ is a unique factorization domain and the factors $p_i$ are irreducible, $q$ has the expression $q = \alpha \prod_{j \in J} p_j$ for some $\alpha  \in \RR$ and $J\subset\{1,\ldots,k\}$. But $J=\{1,\ldots,k\}$ must hold otherwise $q$ becomes a strict divisor of $p_1\cdots p_k$.
\end{proof}

\subsection{Hyperbolicity cones and their minimal polynomials}

An important case of semialgebraic cones is that of hyperbolic cones. Given $p:\ambSpace\to\Re$ a homogeneous polynomial of degree $d$ that is hyperbolic along a direction $e \in \ambSpace$, we defined its hyperbolicity cone (along $e$) in \eqref{eq:hypcone}.
Here, a polynomial over $\ambSpace$ is understood as a polynomial over the coefficients of some basis of $\ambSpace$.

From that definition is not clear if a hyperbolicity cone is even a convex cone, and it is a foundational result of G{\aa}rding \cite{Gard51} that this is in fact the case. That it is semialgebraic follows, for example, from \cite[Theorem 20]{Re06} that states that the hyperbolicity cone is cut out by $p$ and its directional derivatives with respect to $e$. 

Another related very useful description of hyperbolicity cones is the following basic fact (see, e.g., \cite{Gard51} and \cite[Proposition 1]{Re06}).

\begin{lemma} \label{lem:bound_description}
The hyperbolicity cone $\Lambda_+(p,e)$ is the closure of the connected component of $\{x \mid p(x) \not= 0\}$ containing $e$.
\end{lemma}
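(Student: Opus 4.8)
The plan is to introduce the \emph{open} hyperbolicity cone
$\Lambda_{++}(p,e) \coloneqq \{x \in \ambSpace \mid \textup{all roots of } t \mapsto p(te-x) \textup{ are positive}\}$, to prove that it coincides exactly with the connected component $C$ of $\{x \mid p(x) \neq 0\}$ containing $e$, and then to check separately that $\overline{\Lambda_{++}(p,e)} = \Lambda_+(p,e)$. Combining these two identities immediately gives $\Lambda_+(p,e) = \overline{C}$, which is the assertion.

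First I would record two elementary facts about the roots of $t\mapsto p(te-x)$, which I will call the eigenvalues of $x$. Since $p(te-e) = (t-1)^d p(e)$, all eigenvalues of $e$ equal $1$, so $e \in \Lambda_{++}(p,e)$. Since $p(0\cdot e - x) = p(-x) = (-1)^d p(x)$, the value $0$ is an eigenvalue of $x$ if and only if $p(x)=0$; in particular $\Lambda_{++}(p,e) \subseteq \{x \mid p(x)\neq 0\}$.

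The heart of the argument is a sign-constancy observation. By hyperbolicity all eigenvalues are real, so I would list them as continuous functions $\lambda_1(x)\geq \cdots \geq \lambda_d(x)$: the coefficients of $t\mapsto p(te-x)$ are polynomial in $x$, the leading coefficient $p(e)$ is positive, and the sorted roots of a real-rooted polynomial depend continuously on its coefficients. On the open set $\{p\neq 0\}$ none of the $\lambda_i$ vanishes, by the second fact above, so on any connected subset of $\{p\neq 0\}$ each $\lambda_i$ has constant sign by the intermediate value theorem. Because every $\lambda_i(e)=1>0$, it follows that each $\lambda_i$ is positive throughout $C$, i.e.\ $C \subseteq \Lambda_{++}(p,e)$. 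For the reverse inclusion I would invoke Gårding's theorem \cite{Gard59}: $\Lambda_{++}(p,e)$ is convex, hence connected, it is contained in $\{p\neq 0\}$, and it contains $e$, so it must lie inside the component $C$. This gives $C = \Lambda_{++}(p,e)$.

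Finally I would establish $\Lambda_+(p,e) = \overline{\Lambda_{++}(p,e)}$. The inclusion $\overline{\Lambda_{++}(p,e)} \subseteq \Lambda_+(p,e)$ is immediate since $\Lambda_+(p,e)$ is closed and contains $\Lambda_{++}(p,e)$. For the reverse, I note that $p(te-(x+\varepsilon e)) = p((t-\varepsilon)e - x)$, so the eigenvalues of $x+\varepsilon e$ are those of $x$ shifted by $\varepsilon$; hence if $x\in\Lambda_+(p,e)$ then $x+\varepsilon e \in \Lambda_{++}(p,e)$ for every $\varepsilon>0$, and letting $\varepsilon\to 0^+$ yields $x\in\overline{\Lambda_{++}(p,e)}$. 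I expect the only delicate point to be the continuity of the sorted eigenvalue functions together with the correct bookkeeping of signs; once the identity $C=\Lambda_{++}(p,e)$ is secured, the closure step and the remaining inclusions are routine.
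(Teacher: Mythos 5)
Your proof is correct and takes essentially the same route as the source the paper relies on: the paper gives no proof of this lemma, citing \cite[Proposition~1]{Re06} instead, and your argument (identify the open eigenvalue-positive cone $\Lambda_{++}(p,e)$ with the connected component via continuity of the sorted roots and sign constancy, then show $\Lambda_+(p,e)=\overline{\Lambda_{++}(p,e)}$ by the shift $x\mapsto x+\varepsilon e$) is exactly the standard argument behind that citation. One small remark: invoking G\r{a}rding's convexity theorem \cite{Gard59} for the inclusion $\Lambda_{++}(p,e)\subseteq C$ is more than you need and could look circular if one's preferred statement of that theorem is phrased for the connected component rather than for the eigenvalue-positive set; connectedness suffices and can be obtained directly, since any $x\in\Lambda_{++}(p,e)$ is joined to $e$ inside $\{p\neq 0\}$ by the explicit path $x \to x+Me \to Me \to e$ (eigenvalue shifts and scalings keep all eigenvalues positive for $M>0$).
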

We note that  $e$ is in the interior of $\Lambda_+(p,e)$, so 
$\Lambda_{+}(p,e)$ is always full-dimensional in $\ambSpace$.
Lemma~\ref{lem:bound_description} immediately guarantees that $p$ vanishes at the boundary of $\Lambda_+(p,e)$, so the minimal polynomial of the cone, whose existence is guaranteed by Proposition~\ref{prop:bound_hyper} (see also \cite[Lemma~2.1]{HV07}), must divide $p$ over $\mathcal{V}$.
This in turn gives us the following well-known fact.

\begin{lemma}\label{lem:min_hyp}
A minimal polynomial $q:\mathcal{V} \to \Re$ of a hyperbolicity cone $\Lambda_+(p,e)$ is hyperbolic with respect to direction $e$ and $\Lambda_+(p,e)=\Lambda_+(q(e)q,e)$.
\end{lemma}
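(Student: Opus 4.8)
The plan is to use the two facts already assembled just before the statement: by Lemma~\ref{lem:bound_description} the polynomial $p$ vanishes on $\partial \Lambda_+(p,e)$, and hence by Proposition~\ref{prop:bound_hyper} the minimal polynomial $q$ divides $p$, say $p = qr$ for some polynomial $r$. First I would record that $q(e)\neq 0$: since $e\in \interior \Lambda_+(p,e)$ and $q$ vanishes only on the boundary of $\Lambda_+(p,e)$, we cannot have $q(e)=0$. Thus the normalization makes sense and $(q(e)q)(e) = q(e)^2 > 0$, which secures the positivity requirement in the definition of hyperbolicity.

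Next I would prove that $q(e)q$ is hyperbolic along $e$. For an arbitrary $x\in\mathcal{V}$ the factorization gives $p(te-x) = q(te-x)\,r(te-x)$, so every root of the univariate polynomial $t\mapsto q(te-x)$ is also a root of $t\mapsto p(te-x)$; the latter has only real roots because $p$ is hyperbolic along $e$, hence so does the former. Multiplying by the nonzero constant $q(e)$ does not change the roots, so $t\mapsto q(e)q(te-x)$ has only real roots as well. Together with the positivity at $e$, this establishes hyperbolicity of $q(e)q$ along $e$.

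For the cone equality I would argue by two inclusions. The inclusion $\Lambda_+(p,e)\subseteq \Lambda_+(q(e)q,e)$ is the easy direction: since $\{x\mid p(x)\neq 0\}\subseteq\{x\mid q(x)\neq 0\}$, the connected component of $e$ in the first set is contained in that of the second, and passing to closures via Lemma~\ref{lem:bound_description} (noting $\{q(e)q\neq 0\}=\{q\neq 0\}$) yields the claim. The reverse inclusion is where I expect the main obstacle to lie, and I would handle it by exploiting minimality. The key input is the standard fact that a hyperbolic polynomial does not vanish on the interior of its own hyperbolicity cone, so $q\neq 0$ on $\interior \Lambda_+(q(e)q,e)$; meanwhile $q$ vanishes on all of $\partial \Lambda_+(p,e)$ by construction. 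If the inclusion were strict, then since both cones are full-dimensional and closed with $\Lambda_+(p,e)\subseteq\Lambda_+(q(e)q,e)$, one can produce a point $x_0\in\partial\Lambda_+(p,e)\cap\interior\Lambda_+(q(e)q,e)$: pick $y\in\interior\Lambda_+(q(e)q,e)\setminus\Lambda_+(p,e)$ and $z\in\interior\Lambda_+(p,e)$, and let $x_0$ be the last point of the segment $[z,y]$ lying in $\Lambda_+(p,e)$, which stays inside the convex set $\interior\Lambda_+(q(e)q,e)$. At $x_0$ we would have simultaneously $q(x_0)=0$ and $q(x_0)\neq 0$, a contradiction, so the two cones coincide. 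The care needed in justifying $q\neq 0$ on the interior and the existence of the boundary point $x_0$ is the only genuinely nontrivial part of the argument.
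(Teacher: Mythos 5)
Your proof is correct and follows essentially the same route as the paper: hyperbolicity of $q(e)q$ via the factorization $p=qr$ obtained from Proposition~\ref{prop:bound_hyper}, the easy inclusion $\Lambda_+(p,e)\subseteq\Lambda_+(q(e)q,e)$, and the reverse inclusion from the fact that $q$ vanishes on $\partial\Lambda_+(p,e)$ combined with the connectedness description of Lemma~\ref{lem:bound_description} (your segment argument producing $x_0\in\partial\Lambda_+(p,e)\cap\interior\Lambda_+(q(e)q,e)$ is just an explicit instantiation of the paper's connected-component step). One cosmetic slip: the assertion that $q$ ``vanishes only on the boundary'' is not justified as stated (the zero set of $q$ is the Zariski closure of the boundary, which need not equal the boundary), but $q(e)\neq 0$ follows immediately from $q(e)r(e)=p(e)>0$, so nothing is lost.
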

\begin{proof}
By the above observation, $p=qr$ for some polynomial $r$. This implies that for $x \in \mathcal{V}$ we have $p(te-x)=q(te-x)r(te-x)$, and since the roots of $p(te-x)$ are all real so are the ones of $q(te-x)$. By fixing the sign so that $q$ is non-negative in $\Lambda_+(p,e)$, we conclude that $\Lambda_+(p,e) \subseteq \Lambda_+(q(e)q,e)$
and that $\Lambda_+(q(e)q,e)$ is in the connected component of $\{x \mid p(x) \not= 0\}$ containing $e$.
By Lemma \ref{lem:bound_description} we conclude that $\Lambda_+(p,e)=\Lambda_+(q(e)q,e)$.
\end{proof} 
The minimal polynomials for  a hyperbolicity cone $\Lambda_+(p,e)$  are exactly the hyperbolic polynomials  $\hat p$ of minimal degree for which 
$\Lambda_{+}(p,e) = \Lambda_{+}(\hat p, e)$ holds.
We note that minimality as just described and minimality in the sense of Section~\ref{sec:minpoly} coincide by Lemma~\ref{lem:min_hyp}.
From the discussion so far can now specialize Lemma~\ref{lemma:minpoly-prod} to the case of hyperbolicity cones.

\begin{proposition} \label{prop:minpoly-prod-hyp}
Let $p_1,\ldots,p_k:\ambSpace\to\Re$ be hyperbolic polynomials along $e \in \ambSpace$.
Assume that $p_1,\ldots,p_k$ are irreducible.
Then $p_1\cdots p_k$ is a minimal polynomial for $\Lambda_+(p_1\cdots p_k,e)$ if and only if
\begin{equation*}
\quad \forall j=1,\ldots,k,~~\bigcap_{\substack{i=1 \\ i\neq j}}^k \Lambda_+(p_i,e) \supsetneq \Lambda_+(p_1\cdots p_k,e).
\end{equation*}
\end{proposition}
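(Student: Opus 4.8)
The plan is to specialize Lemma~\ref{lemma:minpoly-prod} to the cones $C_i \coloneqq \Lambda_+(p_i,e)$ by re-running its mechanism and tracking exactly which irreducible factors survive in the minimal polynomial of their intersection. Write $P \coloneqq p_1\cdots p_k$. First I would record the relevant properties of the $C_i$: each is a closed, full-dimensional semialgebraic cone with $e\in\interior C_i$; it is proper (a non-constant $p_i$ has a nonempty zero set, which is its boundary), so $C_i\subsetneq\ambSpace$; and since $p_i$ vanishes on $\partial C_i$ (Lemma~\ref{lem:bound_description}) and is irreducible, $p_i$ is, up to scaling, the minimal polynomial of $C_i$. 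Normalizing $p_i(e)>0$ gives $p_i>0$ on $\interior C_i$ and $p_i=0$ on $\partial C_i$. Comparing the roots of $t\mapsto P(te-x)$ with those of the $t\mapsto p_i(te-x)$ shows that $\Lambda_+(P,e)=\bigcap_i C_i$; writing $K$ for this intersection, and using that the interiors share $e$, one has $\interior K=\bigcap_i\interior C_i$ and hence $\partial K=\{x\in K : p_i(x)=0\text{ for some }i\}$.

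Next I would reduce minimality to a divisibility statement. By the remark following Lemma~\ref{lem:min_hyp}, $P$ is a minimal degree polynomial for $K$ exactly when it is the minimal polynomial of $K$ in the sense of Section~\ref{sec:minpoly}. Since $P$ vanishes on $\partial K$, Proposition~\ref{prop:bound_hyper} shows the minimal polynomial $q$ of $K$ divides $P$; as $\RR[x]$ is a UFD and the $p_i$ are irreducible (and, as we may assume, pairwise non-proportional — the proportional case makes both sides of the claimed equivalence fail), $q=\alpha\prod_{i\in J}p_i$ for some $\alpha\in\RR$ and $J\subseteq\{1,\dots,k\}$. Thus $P$ is minimal precisely when $J=\{1,\dots,k\}$, i.e.\ when $p_j\mid q$ for every $j$. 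It therefore suffices to prove, for each fixed $j$, the equivalence $p_j\mid q \iff \bigcap_{i\neq j}C_i\supsetneq K$. The implication $\bigcap_{i\neq j}C_i=K \Rightarrow p_j\nmid q$ is the easy half: when the intersection collapses, $\partial K=\partial\big(\bigcap_{i\neq j}C_i\big)$, so every point of $\partial K$ lies on $\partial C_i$ for some $i\neq j$, whence $\prod_{i\neq j}p_i$ vanishes on $\partial K$; Proposition~\ref{prop:bound_hyper} then gives $q\mid\prod_{i\neq j}p_i$, so $p_j\nmid q$ by non-proportionality.

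The remaining implication — strict containment forces $p_j\mid q$ — is where the real work lies, and I expect the convexity bookkeeping to be the main obstacle. Given $y\in\bigcap_{i\neq j}C_i\setminus C_j$, I would travel along the segment from $e\in\interior C_j$ to $y$ and let $z$ be its last point in $C_j$; since $y\notin C_j$ this $z$ is a proper convex combination lying in $\partial C_j$, so $p_j(z)=0$. On the other hand, $z$ lies strictly between the interior point $e\in\interior C_i$ and the point $y\in C_i$ for every $i\neq j$, so $z\in\bigcap_{i\neq j}\interior C_i$ and $p_i(z)\neq0$ for all $i\neq j$; in particular $z\in K$, and $p_j(z)=0$ places $z$ in $\partial K$. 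Because $q$ vanishes on $\partial K$ we get $q(z)=0$, but if $j\notin J$ then $q(z)=\alpha\prod_{i\in J}p_i(z)\neq0$ since every $i\in J$ satisfies $i\neq j$; this contradiction yields $j\in J$, i.e.\ $p_j\mid q$. Assembling the two halves over all $j$ gives that $P$ is minimal iff $\bigcap_{i\neq j}C_i\supsetneq K$ for every $j$, which is the Proposition. The steps I expect to need the most care are the identity $\interior K=\bigcap_i\interior C_i$ (hence the description of $\partial K$) and the verification that the crossing point $z$ genuinely lands in $\bigcap_{i\neq j}\interior C_i$.
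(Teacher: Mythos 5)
Your proof is correct, and at its core it takes the same route as the paper: the paper's own proof is a one-line reduction to Lemma~\ref{lemma:minpoly-prod} (after observing $\Lambda_+(p_1\cdots p_k,e)=\bigcap_{i=1}^k\Lambda_+(p_i,e)$), and your bookkeeping via Proposition~\ref{prop:bound_hyper} and unique factorization is exactly the mechanism inside that lemma. What you do beyond the paper is make explicit the translation between the lemma's criterion (no strict divisor of $p_1\cdots p_k$ vanishes identically on the boundary of the intersection) and the proposition's geometric criterion ($\bigcap_{i\neq j}\Lambda_+(p_i,e)\supsetneq\Lambda_+(p_1\cdots p_k,e)$ for every $j$): your line-segment argument producing a witness $z\in\partial C_j\cap\bigcap_{i\neq j}\interior C_i$, combined with $\interior K=\bigcap_i \interior C_i$ and the strict positivity of each $p_i$ on $\interior C_i$, supplies exactly the bridge that the paper leaves implicit --- in the paper this witness construction appears only in concrete form inside the proof of Theorem~\ref{th:minpoly-chordal}, where a matrix $A$ singular on one maximal clique and positive definite on all others plays the role of your $z$. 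Your explicit treatment of proportional factors (both sides of the equivalence fail, so the statement survives) is also sound and is not discussed in the paper. One small repair: the parenthetical ``a non-constant $p_i$ has a nonempty zero set, which is its boundary'' conflates the zero set of $p_i$ with $\partial C_i$ (only $\partial C_i\subseteq\{p_i=0\}$ holds); the clean argument that $\Lambda_+(p_i,e)\subsetneq\ambSpace$ is that $-e\notin\Lambda_+(p_i,e)$, since $t\mapsto p_i(te+e)=(t+1)^{d_i}p_i(e)$ has the negative root $t=-1$.
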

\begin{proof}
Observing that $\bigcap_{i=1}^k \Lambda_+(p_i,e) = \Lambda_{+}(p_1\cdots p_k,e)$ holds, the proof follows  Lemma~\ref{lemma:minpoly-prod}.\end{proof}

\section{Duals of chordal cones and minimal polynomials}\label{sec:chordal}

Of special interest to us is the example of \emph{chordal cones} and their duals. Given a graph $G$ with nodes $V=\{1,...,n\}$ and edges $E$, we
consider the subspace 
\begin{equation}\label{eq:sg}
\mathcal{S}(G) \coloneqq\{X \in  \mathcal{S}^n\, | \, X_{ij}=0 \textrm{ for all } i\not=j \textrm{ such that } \{i,j\} \not \in E\}
\end{equation}
and the corresponding cone of positive semidefinite matrices that respect the sparsity pattern induced by $G$ 
\begin{equation}\label{eq:spg}
\mathcal{S}_+(G)\coloneqq \mathcal{S}(G) \cap \psdcone{n}. 
\end{equation}
A cone that is related to $\mathcal{S}_+(G)$ is 
$$\mathcal{S}^*(G)=\{ A \in \mathcal{S}(G) \, | \, A_C \succeq 0 \textrm{ for all } G\textrm{-cliques } C\},$$
where $A_C$ is the submatrix indexed by the vertices of $C$.
The cone $\mathcal{S}^*(G)$ is a hyperbolicity cone since it is the intersection of the symmetric cones $\{A \in \mathcal{S}(G)~|~A_C\succeq 0\}$ among $G$-cliques $C$.
By the classic result of \cite[Theorem 7]{grone1984positive} we have  $\mathcal{S}^*(G) = \mathcal{S}_+(G)^{*}$ if and only if $G$ is chordal, in which case we can also say by \cite[Theorem 3.3]{agler1988positive} that
$$\mathcal{S}_+(G)=\left\{ \sum_i A_i \, | \, A_i \succeq 0 \textrm{ with } \textup{supp}(A_i) \textrm{ contained in a clique of } G\right\}.$$
We observe that the interior of $\mathcal{S}^*(G)$ is given by 
\begin{equation}\label{eq:interior}
\interior(\mathcal{S}^*(G)) = \{ A \in \mathcal{S}(G) \, | \, A_C \succ 0 \textrm{ for all maximal  } G\textrm{-cliques } C\}.
\end{equation}
Let $I_n$ denote the $n\times n$ identity matrix, \eqref{eq:interior} holds
because $(I_n)_C \succ 0$ for all $G$-cliques $C$, so $\interior(\mathcal{S}^*(G)) = \{ A \in \mathcal{S}(G) \, | \, A_C \succ 0 \textrm{ for all } G\textrm{-cliques } C\}$, by \cite[Theorem~6.5]{RT97}. Since every $G$-clique is contained in some maximal clique and principal submatrices of positive definite matrices are positive definite, we obtain \eqref{eq:interior}.


\begin{theorem}\label{th:minpoly-chordal}
A minimal polynomial of the hyperbolicity cone $\mathcal{S}^*(G)$, for $G$ chordal, is
$$p_G(X)=\prod_{C \textrm{ maximal clique of }G} \det(X_C),$$
where $X$ is a symmetric matrix of unknowns.
\end{theorem}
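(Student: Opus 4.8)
The plan is to realize $p_G$ as a product of irreducible hyperbolic polynomials and then apply the product criterion of Proposition~\ref{prop:minpoly-prod-hyp}. Enumerate the maximal cliques of $G$ as $C_1,\dots,C_k$ and set $p_i(X)=\det(X_{C_i})$, regarded as a polynomial on the ambient space $S(G)$ with distinguished direction $e=I_n$. First I would check that each $p_i$ is hyperbolic along $I_n$: indeed $p_i(I_n)=\det(I_{|C_i|})=1>0$, and $p_i(tI_n-A)=\det(tI_{|C_i|}-A_{C_i})$ is, up to sign, the characteristic polynomial of the symmetric matrix $A_{C_i}$, hence real-rooted; moreover its hyperbolicity cone is exactly $\Lambda_+(p_i,I_n)=\{A\in S(G): A_{C_i}\succeq 0\}$. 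Since every $G$-clique sits inside a maximal one and principal submatrices of PSD matrices are PSD, intersecting over maximal cliques recovers the whole cone, so $\bigcap_{i}\Lambda_+(p_i,I_n)=S^*(G)=\Lambda_+(p_G,I_n)$, matching the identity $\bigcap_i\Lambda_+(p_i,e)=\Lambda_+(p_1\cdots p_k,e)$ used in Proposition~\ref{prop:minpoly-prod-hyp}.

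Next I would verify that each $p_i$ is irreducible. Because $C_i$ is a clique, every pair of indices in $C_i$ is either diagonal or an edge of $G$, so all $\binom{|C_i|+1}{2}$ entries of the block $X_{C_i}$ are genuine, unconstrained coordinates of $S(G)$. Thus $p_i$ is, in these variables, the determinant of a generic symmetric matrix, a classically irreducible polynomial; as the remaining coordinates of $S(G)$ do not occur in $p_i$, it stays irreducible over $S(G)$. With irreducibility and hyperbolicity in hand, Proposition~\ref{prop:minpoly-prod-hyp} reduces the theorem to a single non-redundancy statement: for every $j$ one must exhibit a matrix witnessing $\bigcap_{i\neq j}\{A\in S(G): A_{C_i}\succeq 0\}\supsetneq S^*(G)$, i.e.\ a matrix $A\in S(G)$ with $A_{C_i}\succeq 0$ for all $i\neq j$ but $A_{C_j}\not\succeq 0$.

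This last step is the heart of the argument, and my plan is to localize the witness on the clique $C_j$. Take $A$ supported entirely on $C_j$ (legal in $S(G)$ since $C_j$ is a clique), equal to zero outside the $C_j\times C_j$ block and equal on that block to a symmetric matrix $B$ that fails to be PSD yet has all of whose proper principal submatrices PSD; the equicorrelation matrix $B=(1-c)I+c\,\mathbf{1}\mathbf{1}^{\T}$ with $c$ just below $-1/(|C_j|-1)$ does the job when $|C_j|\geq 2$ (its extreme eigenvalue $1+(|C_j|-1)c$ is negative while every smaller equicorrelation block stays PSD), and $B=(-1)$ handles an isolated vertex. Then $A_{C_j}=B\not\succeq 0$. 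For $i\neq j$, since distinct maximal cliques cannot contain one another, $C_i\cap C_j$ is a \emph{proper} subset of $C_j$; the rows and columns of $A_{C_i}$ indexed by $C_i\setminus C_j$ vanish, so $A_{C_i}$ is block-diagonal with nonzero block $B_{C_i\cap C_j}$, a proper principal submatrix of $B$, hence PSD. This produces the required witness for each $j$, and Proposition~\ref{prop:minpoly-prod-hyp} then yields minimality of $p_G$.

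The main obstacle is precisely this non-redundancy: one must certify that dropping any single clique's PSD constraint genuinely enlarges $S^*(G)$, and the delicate point is constructing a matrix whose indefiniteness is invisible to every proper sub-block $C_i\cap C_j$ yet present on $C_j$ itself. This is exactly what a non-PSD matrix with PSD proper principal submatrices provides, combined with the maximality of $C_j$, which guarantees that all the relevant intersections $C_i\cap C_j$ are proper.
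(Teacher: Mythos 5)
Your proposal is correct and follows essentially the same route as the paper: factor $p_G$ into the irreducible clique determinants, reduce via Proposition~\ref{prop:minpoly-prod-hyp} to showing no factor is redundant, and for each maximal clique $C_j$ exhibit a matrix whose defect on $C_j$ is invisible to every proper principal submatrix, with maximality of the cliques guaranteeing $C_i \cap C_j \subsetneq C_j$. The only (immaterial) difference is the witness: you zero-pad a strictly non-PSD equicorrelation block, producing a point outside $S^*(G)$ that directly certifies the strict inclusion, whereas the paper pads with an identity a singular PSD block having positive definite proper principal submatrices, producing a boundary point of $S^*(G)$ at which only $\det(X_{C_j})$ vanishes.
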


\begin{proof}
For a clique $C$ of $G$ consider the cone $K_C=\{X \, | \, X_C \succeq 0\}$ and its minimal polynomial $p_C=\det(X_C)$. 
With that, we have $K_C = \Lambda_{+}(p_C,I_n)$ and $\mathcal{S}^*(G)=\bigcap_{C} K_C = 
\Lambda_{+}(p_G,I_n)$, where $C$ varies over the maximal cliques of $G$. By Proposition~\ref{prop:minpoly-prod-hyp}, 
since every such $p_C$ is irreducible, we just have to show that none of the factors can be omitted. 
In order to do so, for any maximal clique $C$ we construct a matrix $A$ in $\mathcal{S}^*(G)$ such that  $A_C$ is singular (i.e., is in the boundary of $\mathcal{S}^*(G)$) but all the other maximal clique-indexed submatrices are positive definite (i.e., $A$ is in the interior of the intersection of the $K_D$ for $D \neq C$).

To do this, start by constructing $A_C$ to be positive semidefinite and singular but such that all its proper principal submatrices are positive definite. For instance let it be the matrix with $|C|-1$ on the diagonal and $-1$ on every other entry. Complete $A$ by adding $1$ to every other diagonal entry not in $C$. For any maximal clique $D \not = C$ of $G$, the matrix $A_D$ will be block diagonal, with one block being $A_{D \cap C}$ and the other being an identity. Since both $C$ and $D$ are maximal cliques, $D \cap C$ is strictly contained in $C$, so $A_{D \cap C}$ is a proper principal submatrix of $A_C$ hence positive definite. This implies that $A_D$ is positive definite for any maximal clique other than $C$. In particular the only term of the product vanishing at $A$ is $p_C(X)$ so we cannot drop it.
\end{proof}

It follows that the degree of the minimal polynomial of $\mathcal{S}^*(G)$ is the sum of the sizes of its maximal cliques.
While general graphs can have exponentially many cliques with respect to the number of vertices, chordal graphs admit no such behavior. 
In fact, since every chordal graph has an elimination order, there are at most $n$ maximal cliques in a chordal graph with $n$ vertices (see \cite{chordal} for more details on chordal graphs). Since the size of each clique has a trivial bound of $n$, this tells us that the degree of the minimal polynomial of $\mathcal{S}^*(G)$ is $O(|V(G)|^2)$, where $V(G)$ is the set of vertices of $G$. While $|V(G)|^2$ can never be attained, the degree can in fact grow quadratically as illustrated in the following example.

\begin{example}\label{ex:max_degree}
For each $n$ consider the graph $G_n$ constructed in the following way: we take a clique $K$ of size $\lfloor \frac{n-1}{2} \rfloor$ and add $\lceil \frac{n+1}{2} \rceil$ new vertices each connecting to every vertex of $K$ and nowhere else. This graph has $n$ vertices and $\lceil \frac{n+1}{2} \rceil$ maximal cliques of size $\lfloor \frac{n-1}{2} \rfloor+1=\lfloor \frac{n+1}{2} \rfloor$. 

Using Theorem \ref{th:minpoly-chordal} we conclude that the degree of the minimal polynomial of $\mathcal{S}^*(G_n)$ is $\frac{(n+1)^2}{4}$ for $n$ odd and $\frac{(n+1)^2-1}{4}$ if $n$ even. 

\end{example}

\section{Consequences for hyperbolicity and homogenous cones}\label{sec:hyp_hcone}
Before we proceed, we recall that a closed convex cone $\stdCone \subseteq \ambSpace$ is said to be \emph{homogeneous} if its group of automorphisms acts transitively on its relative interior, i.e., 
$\forall x,y \in \reInt \stdCone$, there exists $A \in \Aut(\stdCone)$ such that  $Ax = y$.
Then, a cone $\stdCone$ is said to be \emph{self-dual} if there exists some 
inner product $\inProd{\cdot}{\cdot}$ over $\ambSpace$ for which 
$\stdCone$ coincides with its dual cone $\stdCone^* \coloneqq \{y \mid \inProd{x}{y} \geq 0, \forall x \in \stdCone  \}$.
With that a \emph{symmetric cone} is a self-dual homogeneous cone.
Examples of symmetric cones are the $n\times n$ positive semidefinite matrices over the real numbers $\psdcone{n}$, the nonnegative orthant $\Re^n_+$ and the second-order cone $\SOC{2}{n+1} \coloneqq \{(x,t) \mid t \geq 
\norm{x}_2 \}$, where $\norm{\cdot}_2$ indicates the $2$-norm.

Symmetric cones have a natural notion of rank that comes from its Jordan algebraic structure. Fortunately, we will not need to discuss the details of that and it suffices to recall that the ranks of $\psdcone{n}$, $\Re^n_+$ and $\SOC{2}{n+1}$ are $n$, $n$ and $2$, respectively.

We also recall that given a hyperbolic polynomial $p:\ambSpace \to \Re$ along a hyperbolicity direction $e$, we define the rank of $x \in \ambSpace$ as
\begin{equation*}\label{eq:rank_def}
\rank(x) \coloneqq \text{number of nonzero roots of } t \mapsto p(te-x).
\end{equation*}
A hyperbolicity cone $\Lambda_{+}(p,e)$ is said to be \emph{rank-one generated (ROG)} if its extreme rays are generated by rank-$1$ elements. 
More generally, a given closed convex cone $\stdCone$ is said to be \emph{realizable as  a ROG hyperbolicity cone} if there exists $p$ and $e$ such that $\stdCone = \Lambda_{+}(p,e)$ and  $\Lambda_{+}(p,e)$ is ROG.
The subtlety here is that even if $\Lambda_{+}(p,e) = \Lambda_{+}(\hat p,e)$ holds for a certain cone, the rank function induced by $p$ and $\hat p$ may still be different, so ROGness depends on the choice of $p$.
However, if  $\Lambda_{+}(p,e)$ is ROG, then $p$ must be a minimal polynomial for $\Lambda_{+}(p,e)$, see \cite[Proposition~3.5]{IL23}. 

All symmetric cones are realizable as ROG hyperbolicity cones, for more details see \cite[proof of Lemma~4.1]{GT98} or \cite[Section~3.1.1 and Proposition~3.8]{IL23}. 
For example, we have $\psdcone{n} = \Lambda_{+}(\det,I_n)$. The extreme rays of $\psdcone{n}$ correspond to rank-$1$ matrices (in the usual linear algebraic sense). They  also correspond to 
rank-$1$ elements in the hyperbolic sense, since the matrix rank of $X \in \psdcone{n}$ is the number of nonzero roots of $t \mapsto \det(tI_n-X)$.
In this way, a minimal polynomial of $\psdcone{n}$ is indeed the determinant polynomial, which has degree $n$. For $\Re^n_+$ and $\SOC{2}{n+1}$, minimal polynomials have degrees $n$ and $2$, respectively.
More generally,  a symmetric cone has a minimal polynomial of degree equal to its rank\footnote{The precise proof of this statement comes from the fact a symmetric cone admits a generalized notion of determinant, which is a polynomial of degree equal to its rank. 
With respect to this determinant, a symmetric cone can be realized as a ROG hyperbolicity cone, see the discussion in \cite[Section~3.1.1]{IL23}.}.

If $G$ is chordal, then $\mathcal{S}_+(G)$ (see \eqref{eq:spg}) is a ROG hyperbolicity cone with respect to the usual determinant polynomial and $I_n$, since every $X \in \mathcal{S}_+({G})$ has a decomposition in rank-$1$ matrices that respects the sparsity pattern defined by $G$.
$\mathcal{S}^*(G)$ is also a hyperbolicity cone, but, in contrast, it is rarely ROG as we will see in the next result.

\begin{theorem}\label{theo:chord_dual_rog}
Let $G$ be a chordal graph.
$\mathcal{S}^*(G)$ is realizable as a ROG hyperbolicity cone if and only if 
$G$ is a disjoint union of cliques.
\end{theorem}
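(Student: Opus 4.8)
**The plan is to prove both directions of the equivalence, with the "if" direction being routine and the "only if" direction requiring the real work.**

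First I would handle the easy direction. If $G$ is a disjoint union of cliques $C_1,\ldots,C_m$, then $S(G)$ factors as a direct sum of the full symmetric matrix spaces $\cS^{|C_i|}$, and $S^*(G) = \psdcone{|C_1|} \times \cdots \times \psdcone{|C_m|}$. A direct product of symmetric cones is symmetric, hence realizable as a ROG hyperbolicity cone by the facts recalled before the theorem (each PSD block is ROG with respect to its determinant, and products of ROG cones are ROG). So $p_G = \prod_i \det(X_{C_i})$ is the minimal polynomial and the cone is ROG.

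For the hard direction I would argue the contrapositive: if $G$ is chordal but \emph{not} a disjoint union of cliques, then $S^*(G)$ is not realizable as a ROG hyperbolicity cone. Since $G$ is not a disjoint union of cliques, there must exist two distinct maximal cliques $C$ and $D$ that share at least one vertex, i.e. $C \cap D \neq \emptyset$ with $C \neq D$. The key tool is that, by Theorem~\ref{th:minpoly-chordal}, the minimal polynomial is $p_G = \prod_C \det(X_C)$, and by the argument recalled before the theorem, \emph{any} realization as an ROG cone must use a minimal polynomial; since minimal polynomials are unique up to scaling (Proposition~\ref{prop:bound_hyper}), the only candidate realizing $p$ is $p_G$ itself. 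So it suffices to show that $\Lambda_+(p_G, I_n)$ is \emph{not} ROG with respect to $p_G$.

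To show non-ROGness I would exhibit an extreme ray of $S^*(G)$ that is generated by an element of rank greater than one, where rank is measured by $p_G$. The rank induced by $p_G$ at a point $X$ equals the number of nonzero roots of $t \mapsto p_G(tI_n - X) = \prod_C \det((tI_n - X)_C)$, which is the total multiplicity (summed over maximal cliques $C$) of nonzero eigenvalues of the blocks $X_C$; a rank-one element must therefore have exactly one maximal clique contributing a single nonzero eigenvalue while every other block $X_C$ is zero. I would then construct an extreme ray of $S^*(G)$ that cannot have this form: take the two overlapping maximal cliques $C, D$ and build a matrix $A \in S^*(G)$ supported so that both $A_C$ and $A_D$ are nonzero but each is rank-one and positive semidefinite, arranged so that $A$ lies on an extreme ray of $S^*(G)$ yet has $p_G$-rank at least two (because two distinct blocks each contribute a nonzero eigenvalue). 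The main obstacle will be verifying extremality: I expect the cleanest route is to exploit the facial structure of $S^*(G) = \bigcap_C K_C$ and show that the face generated by $A$ is a single ray, using that $A_C$ and $A_D$ are both rank-one on overlapping index sets so that any decomposition $A = A' + A''$ in $S^*(G)$ is forced to be proportional to $A$. Balancing the overlap constraint (the shared entries indexed by $C \cap D$ must be consistent between the two rank-one blocks) against the extremality requirement is the delicate point, and I would likely need to pick $A$ supported exactly on $C \cup D$ with a carefully chosen common vector on the overlap.
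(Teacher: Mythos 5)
Your overall architecture matches the paper's: the forward direction is the same (direct product of PSD blocks, hence a symmetric cone), and for the converse both you and the paper aim to exhibit an extreme ray of $S^*(G)$ whose hyperbolic rank is at least two. Two remarks on your reduction step. First, the paper does not actually need the implication ``ROG implies minimal'' from \cite[Proposition~3.5]{IL23}: it only uses that $p_G$ divides any admissible $q$ (Proposition~\ref{prop:bound_hyper}), writing $q = h\,p_G$, which suffices since extra factors can only increase rank. Second, your claim that ``it suffices to show $\Lambda_+(p_G,I_n)$ is not ROG with respect to $p_G$'' silently assumes that a putative realization uses the direction $I_n$; a realization could use some other interior direction $U$, and one needs the fact that the rank function is unchanged when the hyperbolicity direction moves within the interior, which the paper justifies via \cite[Theorem~3 and Proposition~22]{Re06}.

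The genuine gap is the construction of the extreme ray, which you leave unverified, and the specific candidate you sketch fails in general. You propose $A$ supported on $C\cup D$ with $A_C = uu^T$ and $A_D = ww^T$ glued along a common vector with nonzero value $a$ on $C\cap D$, and zero entries elsewhere. But membership in $S^*(G)$ requires $A_{C'}\succeq 0$ for \emph{every} maximal clique $C'$, not just $C$ and $D$. If $v\in C\cap D$, $i\in C\setminus D$, $j\in D\setminus C$ and $\{i,j\}$ is an edge of $G$, then $\{v,i,j\}$ is a triangle contained in some third maximal clique $C'$ (a concrete chordal example: the graph on $\{1,\ldots,5\}$ with maximal cliques $\{1,2,4\}$, $\{1,3,5\}$, $\{1,2,3\}$, taking $v=1$, $i=2$, $j=3$). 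Your glue sets $A_{ij}=0$, so $A$ restricted to $\{v,i,j\}$ is
\[
\begin{pmatrix} a^2 & a u_i & a w_j\\ a u_i & u_i^2 & 0\\ a w_j & 0 & w_j^2 \end{pmatrix},
\]
whose determinant is $-a^2u_i^2w_j^2 < 0$ whenever $a,u_i,w_j$ are all nonzero --- and $a\neq 0$ is exactly what your extremality argument needs to couple the two blocks. So $A\notin S^*(G)$; repairing this requires filling in all cross-clique edge entries consistently (a global rank-one completion), with extra case analysis. The paper sidesteps all of this with a far simpler choice: pick a vertex, say $1$, lying in two distinct maximal cliques $G_1,G_2$, and let $E$ be the matrix with a one in entry $(1,1)$ and zeros elsewhere. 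Then every block $E_{C'}$ is diagonal with at most one nonzero entry, so $E\in S^*(G)$ trivially; extremality is checked block-by-block ($E_{C'}=0$ forces both summands' blocks to vanish by positive semidefiniteness, and otherwise $E_{C'}$ spans an extreme ray of a PSD cone); and the factors $\det(X_{G_1})$ and $\det(X_{G_2})$ each contribute the nonzero root $t=1$ to $t\mapsto q(tI_n - E)$, giving $\rank(E)\geq 2$ for any admissible $q$. In short, no gluing across $C\cup D$ is needed: a matrix of linear-algebra rank one supported on a single shared vertex already has hyperbolic rank at least two, because that vertex is counted once per clique factor.
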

\begin{proof}
If $G$ is a disjoint union of cliques $G_1, \ldots G_m$, then $\mathcal{S}^*(G)$ is linearly isomorphic to 
a direct product of smaller positive semidefinite cones that are supported on the 
$G_i$. In particular, 	$\mathcal{S}^*(G)$ is a symmetric cone and therefore it is realizable as a ROG hyperbolicity cone.
	
Conversely, suppose that $G$ is not a disjoint union of cliques.
 Then, there exists a least one node of $G$ that is contained in two distinct maximal cliques  $G_1,G_2$ of $G$.	
Relabelling the nodes if necessary, we may assume that this node is $1$. 
Let $E$ be the matrix that is $1$ at the $(1,1)$-entry and zero elsewhere. 
First, we will prove that $E$ generates an extreme ray of $\mathcal{S}^*(G)$. 
Let $X,Y \in \mathcal{S}^*(G)$ be nonzero and assume that $X+Y = \alpha E$ for some $\alpha \geq 0$. Then, for every $G$-clique $C$ we have
\[
X_C + Y_C = \alpha E_C.
\]
If $1$ is not in the node set of $C$, then $E_{C}$ is the zero matrix, so 
$X_C + Y _C = 0$, which in view of $X_C \succeq 0, Y_C \succeq 0$, implies 
$X_C = Y_C = 0 = E_C$. If $1$ is in the node set of $C$, then, assuming that $C$ has $k$ nodes,  $E_C$ generates an extreme ray of $\psdcone{k}$. 
Therefore,   $X_C \succeq 0, Y_C \succeq 0$ and $X_C + Y_C = \alpha E_C$ implies 
that $X_{C}$ and $Y_C$ are positive multiples of $E_C$.
The conclusion is that for every $G$-clique $C$, the entries of $X_C, Y_C$ are zero with the potential exception of the $(1,1)$-entry if $C$ contains $1$.
We note that every pair of indices $(i,j)$ with $i\neq j$ is either outside the edge set of $G$ (in which case of $X_{ij} = Y_{ij} = 0$) or is in $G$, in which case, it is covered by some clique of $G$.
Overall, we obtain that  $X_{11}$ and $Y_{11}$ are positive and all the other entries of $X$ and $Y$ are zero. This shows that $E$ is indeed an extreme ray of $\mathcal{S}^*(G)$.

The next step is showing that no matter which hyperbolic polynomial $q$
is chosen for $\mathcal{S}^*(G)$, the extreme ray $E$ will always have rank greater than $1$.
Suppose that $\mathcal{S}^*(G) = \Lambda_{+}(q,U)$, where $q$ is a hyperbolic polynomial along the direction $U$. 
Since the polynomial given in Theorem~\ref{th:minpoly-chordal} is 
minimal, there exists some polynomial $h$ such that 
\begin{equation}\label{eq:qh}
q(X) = h(X)\prod_{C \textrm{ maximal clique of }G} \det(X_C)
\end{equation}
holds, which follows from either Proposition~\ref{prop:bound_hyper} or \cite[Lemma~2.1]{HV07}.
Now we recall some facts about hyperbolicity cones. 
First, if $\Lambda_{+}(p, \bar{e})$ is a hyperbolicity cone, 
then for any $\hat e$ belonging to the relative interior of $\Lambda_{+}(p,\bar{e})$, we 
have $\Lambda_{+}(p,\hat e) =  \Lambda_{+}(p,e)$, see \cite[Theorem~3]{Re06}. Also,  
the rank of $x \in \Lambda_{+}(p, \bar{e})$ with respect to $p$ and $\bar{e}$ (i.e., the number of roots of $t \mapsto p(t\bar{e}-x) $)
coincides with the rank of $x$ with respect to $p$ and $\hat e$ (i.e., the number of roots of $t \mapsto p(t\hat{e}-x) $), see \cite[Proposition~22]{Re06}\footnote{A minor detail is that Renegar's result is about the multiplicity of zero as a root of $t \mapsto p(te-x)$.}. 

In particular, the (hyperbolic) rank of $E$ computed with respect $q$ and $U$ is the same as 
the rank of $E$ computed with respect to $q$ and the identity matrix $I_n$.
By assumption, the node $1$ is in at least two maximal cliques $G_1, G_2$ of $G$, so the rank of $E$ is at least two because of the terms $\det(X_{G_1})$ and $\det(X_{G_2})$ in \eqref{eq:qh}.	
\end{proof}

\subsection{Consequences for homogeneous cones}\label{sec:hcone}
Among the chordal cones described in Section~\ref{sec:chordal}, there is special class of cones that are  homogeneous. 
These are the cones $\mathcal{S}_+(G)$ (see \eqref{eq:spg}) where $G$ not only is chordal but also does not contain induced subgraphs isomorphic to a $4$-node path. 
We will refer to such a graph $G$ as being a \emph{homogeneous chordal graph}.
For a proof that $\mathcal{S}_+(G)$ is homogeneous see \cite[Theorem~A]{Ishi13}.
This class of cones was also extensively studied in \cite{TV23}.
The cones  $\mathcal{S}^*(G)$ are also homogeneous because duality preserves homogeneity.

As mentioned in Section~\ref{sec:int}, an open question that was left in \cite{IL23} was whether homogeneous cones can be realized as ROG hyperbolicity cones. 
Theorem~\ref{theo:chord_dual_rog} immediately implies that the answer is ``No'', since $G$ may be homogeneous chordal without being a disjoint union of cliques, in which case $\mathcal{S}^*(G)$ will fail to be ROG. 

For the sake of concreteness, we present a detailed example. 
Consider the following graph $G$.
\begin{center}
	\begin{tikzpicture}[main/.style = {draw, circle}] 
	\node[main] (1) {$1$}; 
	\node[main] (2) [below left of =1]{$2$}; 
	\node[main] (3) [below right of =1]{$3$}; 
	\draw (1) -- (2);
	\draw (1) -- (3);
	\end{tikzpicture}
\end{center}
With that, $\mathcal{S}_{+}(G)$ can be described as follows
\[
\mathcal{S}_{+}(G) = \left\{(x_1,x_2,x_3,x_4,x_5) \in \mathbb{R}^5 \mid \begin{pmatrix}
x_1 & x_2 & x_3\\
x_2 & x_4 & 0\\
x_3 & 0   & x_5\\
\end{pmatrix} \succeq 0 \right\}.
\]
The graph $G$ is chordal and since it has only $3$ vertices it is also homogenous chordal thus, $\mathcal{S}_{+}(G)$ is a homogeneous cone of dimension $5$. We equip $\mathbb{R}^5$ with the Frobenius inner product $\inProd{\cdot}{\cdot}_F$ in such a way that for $x,y \in \Re^5$ we have
\[
\inProd{x}{y}_F = \tr\left(\begin{pmatrix}
x_1 & x_2 & x_3\\
x_2 & x_4 & 0\\
x_3 & 0   & x_5\\
\end{pmatrix} \begin{pmatrix}
y_1 & y_2 & y_3\\
y_2 & y_4 & 0\\
y_3 & 0   & y_5\\
\end{pmatrix} \right).
\]
In this way, following the discussion in Section~\ref{sec:chordal}, the dual of $\mathcal{S}_{+}(G)$ is given by 
\begin{equation}\label{eq:vin_cone}
\mathcal{S}^*(G) = \left\{(y_1,y_2,y_3,y_4,y_5) \in \mathbb{R}^5 \mid \begin{pmatrix}
y_1 & y_2 \\
y_2 & y_4 
\end{pmatrix} \succeq 0, \begin{pmatrix}
y_1 & y_3 \\
y_3 & y_5 
\end{pmatrix} \succeq 0  \right\}.
\end{equation}
This cone $\mathcal{S}^*(G)$ is also called the \emph{Vinberg cone} \cite{Ishi01}.
$G$ has two maximal cliques, so it follows from Theorem~\ref{th:minpoly-chordal} that a minimal polynomial for $\mathcal{S}^*(G)$ is given by 
\[
\det \begin{pmatrix}
y_1 & y_2 \\
y_2 & y_4 
\end{pmatrix}  \det \begin{pmatrix}
y_1 & y_3 \\
y_3 & y_5 
\end{pmatrix}.
\]
In addition, by Theorem~\ref{theo:chord_dual_rog}, $\mathcal{S}_{+}^*(G)$ is not realizable as a ROG hyperbolicity cone. We note this is a corollary.

\begin{corollary}
The Vinberg cone \eqref{eq:vin_cone} is not realizable as a ROG hyperbolicity cone.
\end{corollary}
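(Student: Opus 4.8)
The plan is to recognize this corollary as a direct instance of Theorem~\ref{theo:chord_dual_rog}, so the bulk of the work is merely verifying that the specific graph $G$ drawn above satisfies the hypotheses under which that theorem denies realizability. First I would observe that the cone in \eqref{eq:vin_cone} is precisely $S^*(G)$ for the three-node path $G$ with edges $\{1,2\}$ and $\{1,3\}$, which is already established in the surrounding text. So the entire task reduces to checking the characterization in Theorem~\ref{theo:chord_dual_rog}: $S^*(G)$ is realizable as a ROG hyperbolicity cone if and only if $G$ is a disjoint union of cliques.

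The key step is then purely combinatorial: I would argue that this particular $G$ is \emph{not} a disjoint union of cliques. Indeed, $G$ is connected (node $1$ is adjacent to both $2$ and $3$), so if it were a disjoint union of cliques it would have to be a single clique, i.e.\ the complete graph $K_3$. But the edge $\{2,3\}$ is absent, so $G$ is not complete and hence not a clique. Equivalently, and more in the spirit of the theorem's proof, node $1$ lies in two distinct maximal cliques, namely $\{1,2\}$ and $\{1,3\}$, which is exactly the configuration that the ``conversely'' direction of Theorem~\ref{theo:chord_dual_rog} exploits to manufacture an extreme ray of hyperbolic rank at least two. Since $G$ fails the clique-union condition, Theorem~\ref{theo:chord_dual_rog} immediately yields that $S^*(G)$ is not realizable as a ROG hyperbolicity cone.

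There is essentially no analytic obstacle here; the only thing to be careful about is confirming that $G$ really is chordal, since Theorem~\ref{theo:chord_dual_rog} is stated for chordal graphs. This is trivial because $G$ has only three vertices and two edges, so it contains no cycle at all, let alone an induced cycle of length four or more. Thus all hypotheses of the theorem are met and the conclusion follows. In writing this up I would keep the proof to a single sentence or two, invoking Theorem~\ref{theo:chord_dual_rog} and the identification of $S^*(G)$ with the Vinberg cone, since the text preceding the corollary has already done all the substantive verification.
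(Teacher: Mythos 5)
Your proposal is correct and follows exactly the paper's route: the paper derives this corollary by noting that the Vinberg cone is $S^*(G)$ for the three-node graph with edges $\{1,2\}$ and $\{1,3\}$, which is chordal but not a disjoint union of cliques (node $1$ lies in the two maximal cliques $\{1,2\}$ and $\{1,3\}$), and then invoking Theorem~\ref{theo:chord_dual_rog}. Your verification of the hypotheses is the same as the paper's, just spelled out slightly more explicitly.
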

	
\subsection{Homogeneous cone rank and minimal polynomials}\label{sec:hrank}
Regarding homogeneous cones we are still left with the question of identifying  minimal polynomials and their degrees. As mentioned previously, symmetric cones have a notion of rank and their minimal polynomials have degree equal to the rank, see \cite[Proposition~3.8]{IL23}. 
So, for symmetric cones, this question is completely settled. 
A natural question then is whether the same is true for homogeneous cones. 
As we will see, the answer is no. 
But, first, we need to recall some aspects of the theory of homogeneous cones.

Given a pointed full-dimensional homogeneous closed convex cone $\stdCone$ contained in some finite dimensional Euclidean space $\ambSpace$, it is possible to associate to $\stdCone$ an algebra of ``generalized matrices'' $\mathcal{M}$ in such a way that $\stdCone$ correspond to the elements that have a ``generalized Cholesky decomposition'', i.e., 
\begin{equation}\label{eq:cholesky}
x \in \stdCone \Leftrightarrow \exists t,\,\, \text{s.t.}\,\, x = tt^*,
\end{equation}
where $t \in \mathcal{M}$ is an ``upper triangular'' generalized matrix and ``$*$'' is an operator analogous to the usual matrix adjoint. The elements of $\mathcal{M}$ are generalized matrices in the sense that each $a \in \mathcal{M}$ can be written in a unique way  as $a = \sum _{1\leq i,j \leq n} a_{ij}$, where each $a_{ij}$ belongs to certain subspaces $\mathcal{M}_{ij} \subseteq \mathcal{M}$ satisfying $\mathcal{M} = \bigoplus_{1\leq i,j \leq n} \mathcal{M}_{ij}$. This direct sum decomposition of $\mathcal{M}$ is called a \emph{bigradation}.
The subspaces $\mathcal{M}_{ii}$ in the ``diagonal'' must all be one-dimensional. However, contrary to an usual matrix algebra over a field, the $\mathcal{M}_{ij}$ with $i \neq j$ may have dimension greater than $1$ or it may even be zero.
In this way, the $t$ in \eqref{eq:cholesky} is ``upper triangular'' in the sense that $t_{ij} = 0$ if $i > j$.
Finally, by definition, the number $n$ is the rank of the algebra and corresponds to the \emph{(homogeneous) rank} of the cone $\stdCone$, see \cite[Definition~4]{Ch09}.

This $\mathcal{M}$ is a so-called \emph{T-algebra} introduced by Vinberg \cite{V63}, see also 
the works of Chua for a discussion on T-algebras focused on optimization aspects \cite{CH03,Ch09}. An earlier related discussion can also be seen in \cite[Section~8]{Gu97}.
The precise definition of T-algebra is relatively involved and describes the behavior of the bigradation of $\mathcal{M}$ with respect to upper triangular elements, the underlying algebra multiplication and the adjoint operator. 
Fortunately, for what follows we only need two facts. 
The first is that the rank of a homogeneous cone is well-defined, i.e., a cone may be realizable in multiple ways using different T-algebras, but all of them correspond to isomorphic T-algebras of same rank, see, for example, \cite[Theorem~1]{Ch09}. 
The second is that the dual of a homogeneous cone is also a homogeneous cone of the same rank. We note this as a lemma.
\begin{lemma}\label{lem:hom_cone_rank}
The dual of a homogeneous convex cone of rank $n$ also has rank $n$.
\end{lemma}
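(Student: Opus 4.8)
The plan is to read off the conclusion from Vinberg's T-algebra description of $\stdCone$, which already packages both the homogeneity of the dual and the combinatorial data that computes the rank. By hypothesis $\stdCone$ is realized by a T-algebra $\mathcal{M}$ of rank $n$ through the generalized Cholesky decomposition \eqref{eq:cholesky}, with bigradation $\mathcal{M} = \bigoplus_{1\le i,j\le n}\mathcal{M}_{ij}$ whose diagonal blocks $\mathcal{M}_{ii}$ are one-dimensional; by definition the rank $n$ is simply the number of these diagonal blocks.

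First I would recall the classical fact, due to Vinberg \cite{V63} (see also \cite{Gu97,Ch09}), that the dual of a homogeneous cone is again homogeneous, so that $\stdCone^*$ is at least a homogeneous cone and therefore carries a well-defined rank. The substance of the lemma is that this rank is again $n$, and for this I would use how the T-algebra of $\stdCone^*$ is obtained from that of $\stdCone$: Vinberg's duality realizes $\stdCone^*$ over the same underlying algebra after transposing (equivalently, reversing the order of) the bigradation, so that the off-diagonal block $\mathcal{M}_{ij}$ is exchanged with $\mathcal{M}_{ji}$ while each diagonal block $\mathcal{M}_{ii}$ is left unchanged. In particular the transposed T-algebra still has exactly $n$ one-dimensional diagonal blocks, hence is again a T-algebra of rank $n$.

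To finish, I would invoke the first of the two facts recalled just before the lemma, namely that the homogeneous-cone rank is independent of the chosen T-algebra realization \cite[Theorem~1]{Ch09}. Since $\stdCone^*$ is realized by a rank-$n$ T-algebra, its homogeneous rank equals $n$, which is the claim.

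I expect the one genuinely delicate point to be the middle paragraph: I must state precisely, and cite correctly, that Vinberg's dual cone is realized by the transposed T-algebra and that the transposition fixes the diagonal blocks. All the rank content sits there, since once the diagonal blocks are known to be preserved the equality of ranks is immediate and the remaining steps are pure bookkeeping. As an alternative route, one could instead attempt an intrinsic argument: homogeneous cones are nice cones, so the conjugate-face map is a lattice anti-isomorphism between the faces of $\stdCone$ and $\stdCone^*$, whence the two cones have maximal face chains of the same length---an invariant that, for homogeneous cones, equals the rank. This avoids T-algebras, but it requires separately justifying that the maximal chain length computes the rank, so I would favour the T-algebra argument as the primary approach.
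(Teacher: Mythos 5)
Your proposal is correct and follows essentially the same route as the paper: both invoke Vinberg's ``change of grading,'' under which the T-algebra of $\stdCone^*$ is obtained by reordering the bigradation of the T-algebra of $\stdCone$, preserving the $n$ one-dimensional diagonal subspaces and hence the rank, with well-definedness of the rank supplied by \cite[Theorem~1]{Ch09}. One small nicety: the change of grading is a reversal of the index order (roughly $\mathcal{M}_{ij}\mapsto\mathcal{M}_{n+1-i,\,n+1-j}$) rather than a transposition $\mathcal{M}_{ij}\leftrightarrow\mathcal{M}_{ji}$, but this does not affect your counting of diagonal blocks.
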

\begin{proof}
We only present a sketch of this well-known fact. A T-algebra for the dual cone is obtained from a T-algebra of the primal cone by exchanging the order of the subspaces ${\mathcal{M}}_{ij}$ in an appropriate way. This is referred to in the literature as a ``change of grading'', e.g., see \cite[Chapter 3, section~6]{V63}  or see \cite[Section~2.3]{Ch09}. In particular, the rank of the T-algebra stays the same.
\end{proof}

Returning to the issue of minimal polynomials, it turns out that, in contrast to symmetric cones, the homogeneous cone rank can be smaller than the degree of minimal polynomials.
\begin{corollary}
Let $G$ be a homogenous chordal graph on $n$ nodes. Then, $\mathcal{S}^*(G)$ has homogeneous cone rank $n$. Furthermore,  minimal polynomials  for $\mathcal{S}^*(G)$ have degree $n$ if and only if $G$ is a disjoint union of cliques.
\end{corollary}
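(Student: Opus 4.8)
The plan is to split the corollary into its two assertions and treat them separately: first the homogeneous cone rank of $S^*(G)$, then the degree of its minimal polynomial. For the rank I would exploit that $S_+(G)$ is homogeneous for homogeneous chordal $G$ (by \cite{Ishi13}) and reduce to counting the one-dimensional diagonal subspaces of its T-algebra. Since $S_+(G) \subseteq \psdcone{n}$ consists of the PSD matrices supported on $G$, and a chordal $G$ admits a perfect elimination ordering, every $X \in \interior S_+(G)$ has a no-fill-in Cholesky factorization $X = LL^{\top}$ with $L$ lower triangular and supported on $G$; this realizes the generalized Cholesky decomposition \eqref{eq:cholesky}. The diagonal of the associated bigradation then consists of exactly the $n$ one-dimensional subspaces carrying the diagonal entries $X_{11},\dots,X_{nn}$ (restricting to support $G$ only deletes some off-diagonal $\mathcal{M}_{ij}$), so the rank of $S_+(G)$ is $n$. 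Because $S^*(G) = S_+(G)^*$, Lemma~\ref{lem:hom_cone_rank} then gives that $S^*(G)$ also has homogeneous cone rank $n$.

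For the degree statement I would argue purely combinatorially from Theorem~\ref{th:minpoly-chordal}, which says the minimal polynomial $p_G = \prod_{C} \det(X_C)$ has degree $\deg p_G = \sum_{C} |C|$, the sum ranging over maximal cliques $C$ of $G$. Rewriting this as $\sum_{C} |C| = \sum_{v \in V(G)} \#\{C : v \in C\}$ and using that every vertex lies in at least one maximal clique (as $\{v\}$ is a clique contained in a maximal one), I obtain $\deg p_G \geq n$, with equality if and only if every vertex lies in exactly one maximal clique.

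It then remains to identify ``every vertex lies in exactly one maximal clique'' with ``$G$ is a disjoint union of cliques''. The reverse implication is immediate, since the connected components of a disjoint union of cliques are precisely its maximal cliques and partition $V(G)$. For the forward implication I would suppose every vertex lies in a unique maximal clique, so the maximal cliques are pairwise disjoint; were there an edge $\{u,v\}$ with $u,v$ in different maximal cliques, then $\{u,v\}$ would extend to a common maximal clique containing both, contradicting uniqueness of membership for $u$. Hence there are no edges between distinct maximal cliques, each maximal clique is a connected component, and $G$ is a disjoint union of cliques. Combined with $\deg p_G \geq n$ (equality iff unique membership), this closes the equivalence.

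I expect the main obstacle to be the rank computation rather than the degree equivalence: the latter is elementary clique-counting once Theorem~\ref{th:minpoly-chordal} is available, whereas pinning down that $S_+(G)$ has rank exactly $n$ requires care with the T-algebra/bigradation formalism behind \eqref{eq:cholesky}. The key technical point to verify is that restricting the full matrix T-algebra of $\psdcone{n}$ to the support pattern of $G$ preserves all $n$ diagonal subspaces $\mathcal{M}_{ii}$, so that the rank is unchanged; this is exactly where chordality (guaranteeing no Cholesky fill-in) is used.
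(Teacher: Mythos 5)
Your proposal is correct and follows essentially the same route as the paper: the rank statement via the homogeneity of $S_+(G)$ and Lemma~\ref{lem:hom_cone_rank}, and the degree statement by counting clique sizes in Theorem~\ref{th:minpoly-chordal}, noting equality with $n$ forces each node to lie in exactly one maximal clique. You merely supply details the paper takes as known (a T-algebra sketch of why $S_+(G)$ has rank $n$, and the explicit equivalence with being a disjoint union of cliques), with the small caveat that it is the homogeneous chordal condition, not chordality alone, that underlies the T-algebra structure.
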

\begin{proof}
The cone $\mathcal{S}_+(G)$ is a homogeneous cone of rank $n$, so its dual $\mathcal{S}^*(G)$ is also homogeneous of rank $n$, by Lemma~\ref{lem:hom_cone_rank}.	
Each polynomial $\det (X_C)$ in Theorem~\ref{th:minpoly-chordal} has degree equal to the number of nodes of $C$. The only way that $p_G$ can have degree $n$ is if each node is in at most one maximal clique.	
\end{proof}
In particular, the Vinberg cone in \eqref{eq:vin_cone} has rank $3$ but its minimal polynomials have degree $4$.

A natural question then becomes how to bound the degree of minimal polynomials in terms of the homogeneous cone rank. We note that it is not obvious that this is even possible in the first place. 
However, we have the following result.

\begin{proposition}\label{prop:exp_bound}
Let $\stdCone$ be a pointed homogeneous cone of rank $n$. 
Then, the degree $d$ of minimal polynomials of $\stdCone$ satisfy 
$d \leq 2^{n-1}$.
\end{proposition}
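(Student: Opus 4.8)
The plan is to induct on the rank $n$, using the recursive structure of homogeneous cones coming from Vinberg's theory of $T$-algebras together with the divisibility statement in Proposition~\ref{prop:bound_hyper}. Let $d_n$ denote the largest degree of a minimal polynomial among all pointed homogeneous cones of rank $n$. First I would establish the recursion $d_n \le 2 d_{n-1}$ with base case $d_1 = 1$ (a rank-$1$ homogeneous cone is $\Re_+$, whose minimal polynomial is $x$). Unwinding this immediately gives $d_n \le 2^{n-1}$ for every such cone, which is the claim.

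The structural input I would record is the block decomposition obtained by peeling off the last diagonal index $n$ of the $T$-algebra $\mathcal{M}$ of $\stdCone$. The principal sub-$T$-algebra on the indices $\{1,\dots,n-1\}$ is again a $T$-algebra and corresponds to a pointed homogeneous cone $\stdCone'$ of rank $n-1$ (see \cite{V63,Ch09}). Using the generalized Cholesky factorization \eqref{eq:cholesky}, an element $x$ decomposes into a scalar $\xi \in \Re$ (the one-dimensional $(n,n)$-diagonal part), an off-diagonal vector $b$, and a block $X$ in the ambient space of $\stdCone'$, and membership is governed by a Schur-complement condition
\begin{equation*}
x \in \interior \stdCone \iff \xi > 0 \ \text{ and }\ \xi X - b b^* \in \interior \stdCone',
\end{equation*}
where $b \mapsto b b^*$ is the quadratic ``square'' map of the $T$-algebra, which always takes values in $\stdCone'$. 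I would verify this exactly as for the scalar Schur complement of $\psdcone{n}$: writing $x = t t^*$ with $t$ upper triangular whose last diagonal entry $\tau$ satisfies $\tau^2 = \xi$, the top-left block of $t t^*$ equals $X - \tfrac{1}{\xi} b b^* \in \stdCone'$.

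Letting $p'$ be a minimal polynomial of $\stdCone'$, so $\deg p' \le d_{n-1}$, I would then consider
\begin{equation*}
\tilde p(x) \coloneqq p'\!\left(\xi X - b b^*\right).
\end{equation*}
Each entry of $\xi X - b b^*$ is homogeneous of degree $2$ in $x$, and $p'$ is homogeneous of degree $\deg p'$, so $\tilde p$ is a polynomial of degree $2\deg p' \le 2 d_{n-1}$ and is not identically zero. The crux is to show $\tilde p$ vanishes on $\partial\stdCone$. On the part of the boundary with $\xi > 0$, the Schur complement $\xi X - b b^*$ lies on $\partial\stdCone'$, so $\tilde p(x) = p'(\xi X - b b^*) = 0$. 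On the part with $\xi = 0$, approximating $x$ by interior points and passing to the limit gives $-b b^* \in \stdCone'$; since also $b b^* \in \stdCone'$ and $\stdCone'$ is pointed, $b b^* = 0$, whence $\tilde p(x) = p'(0) = 0$. Thus $\tilde p$ vanishes on all of $\partial\stdCone$, so by Proposition~\ref{prop:bound_hyper} the minimal polynomial of $\stdCone$ divides $\tilde p$, giving $d_n \le \deg \tilde p \le 2 d_{n-1}$.

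The main obstacle I anticipate is not the degree bookkeeping but justifying the recursive block decomposition and the Schur-complement characterization purely from the $T$-algebra axioms: namely, that the sub-$T$-algebra on $\{1,\dots,n-1\}$ genuinely yields a rank-$(n-1)$ pointed homogeneous cone $\stdCone'$, and that the square map satisfies $b b^* \in \stdCone'$ (used in the $\xi = 0$ boundary case, where pointedness of $\stdCone'$ is also essential). Once this structural fact is in hand, the induction closes routinely.
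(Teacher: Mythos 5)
Your proposal is correct in outline, but it takes a genuinely different route from the paper's proof, which does not induct on the rank at all: the paper invokes Ishi's polynomials $D_1,\ldots,D_n$ with $\deg D_i = 2^{i-1}$, which characterize $\interior \stdCone$ via $D_i(x)>0$ for all $i$ and satisfy the recurrence \eqref{eq:dk} (see \cite[Propositions~1.3 and 1.4]{Ishi01}); at a boundary point some $D_k$ vanishes, and the recurrence forces $D_{k+1}(x) = -(q_{k+1,k}(x))^2 \leq 0$, hence $D_{k+1}(x)=0$, propagating until $D_n(x)=0$, so that $D_n$, of degree $2^{n-1}$, vanishes on $\partial\stdCone$ and Proposition~\ref{prop:bound_hyper} concludes. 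Your induction $d_n \leq 2 d_{n-1}$ via the Schur complement $\xi X - b b^*$ is essentially the single recursive step of which Ishi's $D_k$ are the iterates: the recurrence \eqref{eq:dk} is the algebraic shadow of that Schur-complement step, and $\deg D_i = 2^{i-1}$ mirrors your doubling. The structural inputs you flag as the main obstacle are genuine theorems rather than routine consequences of the axioms: that every homogeneous cone of rank $n$ is the Siegel cone over a pointed homogeneous cone $\stdCone'$ of rank $n-1$ together with a $\stdCone'$-positive quadratic map (which gives both your interior characterization and $b b^* \in \stdCone'$) is Vinberg's Siegel-cone realization, see \cite[Chapter~3]{V63}, and you should cite it rather than recompute $t t^*$ directly, since $T$-algebras are nonassociative and the block computation requires the axioms' care. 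Granting that citation, your boundary analysis is sound in both cases (for $\xi>0$ a limiting argument places $\xi X - b b^*$ on $\partial \stdCone'$; for $\xi = 0$ pointedness forces $b b^* = 0$ and $p'(0)=0$ by homogeneity of $p'$, which holds because the minimal polynomial of a cone is homogeneous), and the two approaches deliver the same bound: yours additionally yields the refined inequality $d(\stdCone) \leq 2\, d(\stdCone')$ for the specific rank-reduced cone, while the paper's produces an explicit polynomial, namely $D_n$, vanishing on the boundary.
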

\begin{proof}
Without loss of generality, we may assume that $\stdCone$ has dimension $m$ and is contained in some $\Re^m$. In this way, $\stdCone$ has non-empty interior with respect to $\Re^m$. 
In view of Proposition~\ref{prop:bound_hyper}, it is enough to exhibit a polynomial of degree $2^{n-1}$ that vanishes on the boundary of $\stdCone$.
Such a polynomial is described in \cite[Chapter~3, \S 3]{V63} and also in \cite[Section~1]{Ishi01}. Here we will follow the account in \cite{Ishi01}.

There exists $n$ polynomials $D_i : \Re^m \to \Re$ of degree 
$2^{i-1}$ such that the following property holds
\[
x \in \interior \stdCone \Leftrightarrow D_{i}(x) > 0, i \in \{1, \ldots, n\},
\]
see the discussion that goes from (1.11) in \cite{Ishi01} to \cite[Proposition~1.3]{Ishi01} or see the discussion around \cite[Chapter~3, \S 3, Proposition~2]{V63} for the analogous result in Vinberg's notation.
In particular, the $D_{i}$'s must be nonnegative on the boundary of $\stdCone$.
Intuitively, the $D_i$ are similar to the leading principal minors of symmetric matrices.

A key property is that the $D_i$'s satisfy a recurrence relation of the following form:
\begin{equation}\label{eq:dk}
D_k = D_1D_2 \cdots D_{k-1} p_k - \left(\sum _{i < k-1} D_{i+1} \cdots D_{k-1}q_{ki}^2\right) - q_{k,k-1}^2,
\end{equation}
see \cite[Proposition~1.4]{Ishi01}. 
Here, $p_k$, $q_{ki}$ and $q_{k,k-1}$ are polynomials whose specific formulae do not matter for our purposes.

Next, let $x$ be in the boundary of $\stdCone$.
We are now positioned to argue that $D_n(x)$ (which has degree $2^{n-1}$) must be zero. Since $D_i(x) \geq 0$ for all $i$ and $x$ is not in the interior of $\stdCone$, there exists some $k$ for which $D_{k}(x) = 0$ holds.
If $k = n$ we are done. Otherwise, in view of $\eqref{eq:dk}$, we have
\[
D_{k+1}(x) = -(q_{k+1,k}(x))^2.
\]
Since $D_{k+1}(x) \geq 0$, we have $D_{k+1}(x) = 0$. By induction, we conclude that if $D_{k}(x)$ vanishes, then all the $D_{j}(x)$ vanish for $j > k$. In particular $D_{n}(x)$ is zero.
\end{proof}
We note that G\"uler has a similar result in \cite[Section~8]{Gu97},
where he showed that homogeneous cones are hyperbolicity cones. 
However it is not immediately clear the degree of the polynomial obtained in \cite[Theorem~8.1]{Gu97}. {In more detail, G\"uler showed in \cite[Theorem~8.1]{Gu97}  that $p(x) = \prod_{i=1}^n \chi _i(x)^{d_i}$ is a hyperbolic polynomial for a homogeneous cone $\stdCone$, where $(d_1,\ldots, d_n) \coloneqq (1,1,2,4,\ldots,2^{n-2})$ and $\chi _i(x)$ are \emph{rational} functions satisfying $x \in \reInt \stdCone \Leftrightarrow \chi_{i}(x) > 0, \forall i$. From the discussion therein, it is not clear what the degree of $p$ is, since the degrees of the rational functions $\chi _i(x)$ are not described.}

Proposition~\ref{prop:exp_bound} is somewhat disappointing because it gives an exponential upper bound on the minimal  polynomial for homogeneous cones. 
We note that Ishi identified the irreducible components of $D_k$ and was able to express the interior of a  homogeneous cone using certain polynomials $\Delta_k$ of potentially smaller degree than the $D_k$, see \cite[Proposition~2.3]{Ishi01}.
Related to that, Nakashima described techniques for computing the degrees of $\Delta_k$, see \cite[Equation~(1.5), Theorem~6.1]{Na14} and \cite[Lemma~1.1]{Na18}.
Nevertheless, we were not able to improve
the bound in Proposition~\ref{prop:exp_bound}. 

In contrast, in the case of cones arising from homogeneous chordal sparsity patterns, the situation is far more favourable and we have a quadratic bound in terms of the rank.

\begin{theorem}\label{theo:degree}
Let $G$ be a homogenous chordal graph on $n$ nodes. Then, a minimal polynomial for $\mathcal{S}^*(G)$ has degree no larger than $\lceil \frac{n+1}{2} \rceil \lfloor\frac{n+1}{2} \rfloor $.
\end{theorem}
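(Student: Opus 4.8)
The plan is to reduce the statement to a purely combinatorial inequality and then bound the relevant graph quantity. By Theorem~\ref{th:minpoly-chordal}, a minimal polynomial of $S^*(G)$ is $p_G = \prod_{C}\det(X_C)$, the product over the maximal cliques $C$ of $G$, so its degree equals $\sum_{C}|C|$, the sum of the sizes of the maximal cliques. Thus the theorem is equivalent to showing that, for every homogeneous chordal graph $G$ on $n$ nodes, $\sum_{C}|C| \le \lceil\frac{n+1}{2}\rceil\lfloor\frac{n+1}{2}\rfloor = \lfloor (n+1)^2/4\rfloor$. Example~\ref{ex:max_degree} already exhibits a graph attaining this value, so only the upper bound requires proof.

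Next I would exploit the structure of homogeneous chordal graphs. These are exactly the chordal, $P_4$-free graphs; since every induced cycle of length at least $5$ contains an induced $P_4$, this class coincides with the $(P_4,C_4)$-free graphs, i.e.\ the \emph{trivially perfect} graphs. Such graphs are the comparability graphs of rooted forests: there is a rooted forest $F$ on $V(G)$ with $u,v$ adjacent in $G$ if and only if one is an ancestor of the other. Under this correspondence the maximal cliques are precisely the root-to-leaf paths, so the number $m$ of maximal cliques equals the number of leaves of $F$, and each node $v$ lies in exactly $L(v)$ maximal cliques, where $L(v)$ is the number of leaves in the subtree rooted at $v$.

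The counting step is then immediate: writing $\sum_{C}|C| = \sum_{v}(\text{number of maximal cliques containing } v) = \sum_{v} L(v)$, I split the sum over the $m$ leaves (each with $L(v)=1$) and the remaining $n-m$ internal nodes (each with $L(v)\le m$), obtaining $\sum_{C}|C| \le m + (n-m)m = m(n-m+1)$. Maximizing $m(n-m+1) = -(m-\frac{n+1}{2})^2 + \frac{(n+1)^2}{4}$ over integers $m$ with $1\le m\le n$ gives $m(n-m+1)\le \lfloor (n+1)^2/4\rfloor$, with equality at $m=\lceil\frac{n+1}{2}\rceil$, which is exactly the ``broom'' of Example~\ref{ex:max_degree}.

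The main obstacle is the load-bearing input above, namely the forest representation of trivially perfect graphs and the identification of maximal cliques with root-to-leaf paths. To keep the argument self-contained I would instead prove the sharper bound $\sum_{C}|C| \le m(n-m+1)$ directly by induction on the recursive description of trivially perfect graphs, where each such graph is either a disjoint union of at least two smaller ones or a single universal vertex joined to a smaller one. Adding a universal vertex sends the triple $(n,m,\sum_{C}|C|)$ to $(n+1,m,\sum_{C}|C|+m)$, and for a disjoint union all three quantities add; the only nontrivial verification is the disjoint-union step, whose defining inequality reduces to $m_1(n_2-m_2)+m_2(n_1-m_1)\ge 0$, which holds because $m_i\le n_i$, i.e.\ a chordal graph on $n_i$ vertices has at most $n_i$ maximal cliques (already noted after Theorem~\ref{th:minpoly-chordal}). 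Optimizing over $m$ as above then finishes the proof. The delicate points are confirming that homogeneous chordal graphs are exactly the trivially perfect graphs and checking the two clique transformation rules; neither is hard, but both require care.
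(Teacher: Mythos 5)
Your proposal is correct, and its skeleton coincides with the paper's: reduce via Theorem~\ref{th:minpoly-chordal} to bounding $\sum_C |C|$, pass to the rooted-forest (comparability) representation of homogeneous chordal graphs, establish the bound $m(n-m+1)$ where $m$ is the number of maximal cliques, and optimize over $m$ (the paper writes this as $(k+1)(n-k)$ with $m=k+1$ maximal paths). Where you genuinely diverge is in how the key inequality is proved. The paper counts per path: the leaf of each maximal root-to-leaf path lies on no other maximal path, so each of the $k+1$ paths avoids $k$ nodes and has length at most $n-k$. You instead count per node, writing $\sum_C|C|=\sum_v L(v)$ with $L(v)$ the number of leaves below $v$, and --- more substantially --- you offer a self-contained induction on the recursive structure of trivially perfect graphs (disjoint union, or universal vertex joined to a smaller graph), which avoids citing the forest representation from \cite{TV23} altogether. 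Your induction checks out: the universal-vertex step sends $(n,m,\Sigma)$ to $(n+1,m,\Sigma+m)$ and preserves the inequality, and the disjoint-union step reduces to $m_1(n_2-m_2)+m_2(n_1-m_1)\ge 0$, which holds since $m_i\le n_i$ for chordal graphs; your identification of homogeneous chordal graphs with $(P_4,C_4)$-free (trivially perfect) graphs is also standard and correct. What each approach buys: the paper's argument is shorter once the forest correspondence is in hand; yours is sharper in presentation (the intermediate bound $m(n-m+1)$ is stated explicitly with its equality case, the broom of Example~\ref{ex:max_degree}) and is independent of the external representation theorem, at the cost of invoking (or proving) Wolk's theorem that every connected trivially perfect graph on at least two vertices has a universal vertex.
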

\begin{proof}
We start by noting that the structure of a homogeneous chordal graph can be completely characterized by a rooted forest. 
A rooted forest is a directed acyclic graph (DAG),  whose connected components are trees having a marked node, which we call a root, and having all edges oriented away from the root. As is the case for every DAG, it induces a partial order in its nodes. In \cite[Section 2.3]{TV23} it is noted that for any homogeneous chordal graph there is a rooted forest $T$ with the same nodes as $G$ such that $G$ is the comparability graph of $T$, i.e., two nodes are connected if they are comparable in the partial order induced by $T$, in other words, if there is a directed path in $T$ between them. 

With this correspondence, maximal cliques of $G$ correspond to maximal paths away from a root in $T$. To see this, we note that the nodes form a clique in $G$ if and only if the partial order induced by $T$ restricted to those nodes becomes a total order, which in its turn means that we can order the nodes such that there is a path from each node to its successor, so they are all contained in an oriented path. 
With this correspondence, by Theorem~\ref{th:minpoly-chordal}, the degree of the minimal polynomial of $\mathcal{S}^*(G)$ is obtained by summing the numbers of nodes of every path from a root to a leaf, plus the number of isolated roots. We will now bound this quantity.

We observe that in a tree DAG, there is at most one directed path from any 
two nodes. In addition, all maximal paths must start on a root. This implies that the leaf of a maximal path cannot be a part of any other maximal path.
So if there are $k+1$ maximal paths, for each of them there are $k$ nodes it cannot contain, so the maximum length is $n-k$, and the maximum sum of lengths is $(k+1)(n-k)$.  We just have to maximize this number over all integer $k$ between $1$ and $n-1$ and this is attained at $k=\lfloor \frac{n-1}{2} \rfloor$, giving the degree $\lceil \frac{n+1}{2} \rceil \lfloor\frac{n+1}{2} \rfloor$ as stated. This is precisely the cone in Example~\ref{ex:max_degree}.
%
\end{proof}

For general homogeneous cones it seems hard to give a polynomial bound on the degree of minimal polynomials in terms of the rank. However, as a consequence of the fact that the homogeneous cones can be realized as slices of positive semidefinite cones (i.e., homogeneous cones are spectrahedral), we have the following ``easy'' bound in terms of the dimension of the cone.

\begin{proposition}\label{prop:sdp_representation}
Let $\stdCone$ be a pointed homogeneous cone. The degree $d$ of minimal polynomials of $\stdCone$ satisfies $d \leq \dim \stdCone$. 
\end{proposition}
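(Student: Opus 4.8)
The plan is to combine two ingredients: the fact, recalled immediately before the statement, that a homogeneous cone is spectrahedral (a slice of a positive semidefinite cone), and Proposition~\ref{prop:bound_hyper}, which says the minimal polynomial must divide any polynomial that vanishes on the boundary. First I would fix a spectrahedral realization $\stdCone \cong \{x \in \Re^m \mid A(x) \succeq 0\}$, where $m = \dim \stdCone$, $A \colon \Re^m \to \cS^N$ is linear, and some interior point $e$ satisfies $A(e) \succ 0$ (after a congruence we may take $A(e) = I_N$). Because $\stdCone$ is pointed, $A$ is injective: any $v \in \ker A \setminus \{0\}$ would satisfy $A(x + sv) = A(x)$ for all $s \in \Re$, forcing the whole line $x + \Re v$ into $\stdCone$ for $x$ interior, which contradicts pointedness. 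Since $\det A(x)$ vanishes wherever $A(x)$ is singular, it vanishes on the boundary of $\stdCone$, so Proposition~\ref{prop:bound_hyper} gives that the minimal polynomial $q$ divides $\det A(x)$, whence $\deg q \le N$.

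The crux is then to control the size of the representation by the dimension, that is, to exhibit a realization with $N \le m$ (equivalently, a block-diagonal realization $\stdCone = \{x \mid A_1(x) \succeq 0, \dots, A_k(x) \succeq 0\}$ in which the block sizes sum to at most $m$), so that $\deg q \le N \le m = \dim \stdCone$. This is exactly the step where homogeneity is indispensable, and I expect it to be the main obstacle: the inequality genuinely fails for general spectrahedral cones, since the hyperbolicity cone of a hyperbolic ternary quartic is a three-dimensional spectrahedral cone (a slice of $\psdcone{4}$) whose minimal polynomial has degree $4 > 3$, so the bound cannot follow from the spectrahedral property alone. For homogeneous cones the required small representation is furnished by their structure theory: the generalized Cholesky / T-algebra description recalled in Section~\ref{sec:hrank} realizes $\stdCone$ as the self-adjoint positive part of a rank-$n$ matrix algebra, and unfolding the generalized entries into genuine symmetric matrix blocks produces a block-diagonal spectrahedral representation whose total size is bounded by $\dim \stdCone$.

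The prototype to keep in mind, and the cleanest way to motivate the bound, is the chordal case: for $S^*(G)$ the natural representation has one positive semidefinite block $X_C$ per maximal clique, with total size $\sum_{C} |C|$, which by Theorem~\ref{th:minpoly-chordal} is exactly the degree of the minimal polynomial, and one checks that this total does not exceed $\dim S^*(G) = |V(G)| + |E(G)|$ (consistent with Example~\ref{ex:max_degree}, where it is strictly smaller). Thus, once the size-control step is in place for a general homogeneous cone, the determinant of the block-diagonal matrix is a boundary-vanishing polynomial of degree at most $\dim \stdCone$, and Proposition~\ref{prop:bound_hyper} delivers the claimed bound on the degree of the minimal polynomial.
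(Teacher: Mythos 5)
Your proposal is correct and follows essentially the same route as the paper: the size-controlled representation you single out as the crux is precisely Chua's result \cite[Corollary~4.3]{CH03}, cited in the paper, which gives an injective linear map $M:\Re^k \to \cS^{m}$ with $m \leq k = \dim \stdCone$ and $M(\reInt \stdCone) = \pdcone{m} \cap M(\Re^k)$, after which the paper, like you, takes $q = \det \circ M$ and invokes the divisibility of boundary-vanishing polynomials by the minimal polynomial. Your T-algebra ``unfolding'' heuristic is indeed how Chua's construction proceeds (and your ternary-quartic example correctly shows the bound cannot come from spectrahedrality alone), so the only thing separating your sketch from a complete proof is that citation rather than any new mathematical idea.
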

\begin{proof}
Without loss of generality, we may assume that $\stdCone$ is contained in some $\Re^k$ for which we have $k = \dim \stdCone$.
Chua proved in \cite[Corollary~4.3]{CH03} that if $\stdCone$ is a homogeneous cone in $\Re^k$, then for some $m \leq k$, there exists an injective linear map $M:\Re^k \to \cS^{m}$ with the property that \[
M(\reInt \stdCone) = \pdcone{m} \cap M(\Re^k)\]
holds, where $\pdcone{m}$ is the cone of $m\times m$ positive definite matrices and $\reInt \stdCone$ is the relative interior of $\stdCone$. See also \cite[Section~6]{TV23} which discusses related results contained in \cite{V63,Ro66,FB02} and other works. 

In particular, we have that $M(\stdCone) = \psdcone{m} \cap M(\Re^k)$.
Put otherwise, $\stdCone$ is linearly isomorphic to an intersection of a hyperbolicity cone $(\psdcone{m})$ and a subspace. 
If we let $e \in \stdCone$ be such that $M(e) \in \pdcone{m}$, then 
the composition $q\coloneqq \det \circ M$ corresponds to a hyperbolic polynomial of degree $m$ along $e$ such that $\stdCone = \Lambda_{+}(q,e)$.
This shows that $d \leq m \leq k = \dim \stdCone$.
\end{proof}
Unfortunately, there is no straightforward relation between rank and dimension. For example, second-order cones of dimension $m+1$ always have rank $2$ (for $m > 0$). In particular, for fixed rank, one may construct homogeneous cones of arbitrary large dimensions. 

In view of the discussion so far, during the writing of the first version of this paper we considered the following (then open) question.
\begin{opQ}
Is there a polynomial bound on the degrees of minimal polynomials of homogeneous cones in terms of rank? Or, more optimistically, does a quadratic bound hold as in the case of homogeneous chordal cones?	
\end{opQ} 

Shortly after we completed the paper, we were informed by Nakashima that the answer is \emph{no} and, in particular, for any integer $r > 0$ it is possible to construct  a homogeneous cone of rank $r$ for which its minimal polynomials have 
degree $2^{r-1}$, see \cite{Na24} for more details.
This shows that the bound in Proposition~\ref{prop:exp_bound} cannot be improved.



We now conclude this paper with a few remarks on interior point methods.
	The \emph{barrier parameter} of a closed convex cone $\stdCone$ is the smallest $d$ for which there exists a self-concordant barrier function for $\stdCone$ with parameter $d$, see, for example, \cite{GT98,Gu97}. 
	The importance of the barrier parameter is that it can be used to bound the worst-case iteration complexity of interior-point methods.
	
	We recall here the optimal barrier parameter for a homogeneous cone is equal to its rank, see \cite[Theorem~4.1]{GT98}. In contrast, if $\stdCone = \Lambda_+(p,e)$ is a hyperbolicity cone and $p$ is a hyperbolic polynomial of degree $d$, then $-\log p$ is a self-concordant barrier for $\stdCone$ of parameter
	$d$, see \cite[Section~4]{Gu97}. 
	The function $x \mapsto -\log p(x)$  is called a \emph{hyperbolic barrier function} for $\stdCone$.
	
	Suppose that $G$ is a homogeneous chordal cone with $n$ nodes. 
	Then, the homogeneous cone rank of $\mathcal{S}^*(G)$ is $n$ and, therefore, 
	the barrier parameter of $\mathcal{S}^*(G)$ is $n$ as well.
	However, as a consequence of Theorem~\ref{th:minpoly-chordal}, no hyperbolic barrier function has parameter $n$ unless $G$ is a disjoint union of cliques. 
	Put otherwise, hyperbolic barrier functions for homogeneous cones are typically not optimal, even if the polynomial considered is minimal. 
	
	For $\mathcal{S}^*(G)$, in view of Theorem~\ref{theo:degree}, the difference in order between the optimal barrier parameter and the one afforded by the best possible hyperbolic barrier function is polynomial. However, the discussion in \cite{Na24} indicates that, in general, the difference in order may be exponential.
	
	The existence of a gap between both quantities suggests that it may be more advantageous to deal with a homogeneous cone constraint on its own terms (e.g., as in \cite{Ch09,TV23}) instead of seeing it as a hyperbolicity cone or as a slice of a positive semidefinite cone.
	
	Some numerical evidence towards this was given in \cite[Section~14.4]{KT23}, where a problem of minimizing the nuclear norm is solved through two different formulations. 
	Denoting by $\Re^{m\times n}$ the space of $m\times n$ matrices and by $U^T$ the transpose of $U \in \Re^{m\times n}$, the first formulation uses a self-concordant barrier for the set
	$\{(Z,U) \in \cS^m \times \Re^{m\times n} \mid Z-UU^T \in \psdcone{m}\}$ (see Section~7 therein), which is a slice of the homogeneous cone of rank $m+1$ given in \cite[Example~5]{GT98}. 
	The second uses a formulation via semidefinite programming.
	The results in \cite[Fig.~2]{KT23} seem to indicate that the former is significantly more efficient than the latter when $n$ is much larger than $m$.
	 
	
{\small
\section*{Acknowledgements}
We would like to thank Hideto Nakashima for helpful comments.
We also thank the referee for their  comments, which helped to improve the paper.
The support of the Institute of Statistical Mathematics in the form of a visiting professorship for the first author is gratefully acknowledged.
}

%
%


\bibliographystyle{alpha}
\bibliography{selfdual}{}

\begin{thebibliography}{AHMR88}

\bibitem[AHMR88]{agler1988positive}
Jim Agler, William Helton, Scott McCullough, and Leiba Rodman.
\newblock Positive semidefinite matrices with a given sparsity pattern.
\newblock {\em Linear algebra and its applications}, 107:101--149, 1988.

\bibitem[BP93]{chordal}
Jean R.~S. Blair and Barry Peyton.
\newblock An introduction to chordal graphs and clique trees.
\newblock In Alan George, John~R. Gilbert, and Joseph W.~H. Liu, editors, {\em
  Graph Theory and Sparse Matrix Computation}, pages 1--29, New York, NY, 1993.
  Springer New York.

\bibitem[Cha09]{Cha09}
Robert Chares.
\newblock {\em Cones and interior-point algorithms for structured convex
  optimization involving powers and exponentials}.
\newblock {PhD} thesis, Universit\'{e} catholique de Louvain, 2009.

\bibitem[Chu03]{CH03}
Chek~Beng Chua.
\newblock Relating homogeneous cones and positive definite cones via
  \textit{T}-algebras.
\newblock {\em SIAM J. Optim.}, 14(2):500--506, 2003.

\bibitem[Chu09]{Ch09}
Chek~Beng Chua.
\newblock A \textit{T}-algebraic approach to primal-dual interior-point
  algorithms.
\newblock {\em SIAM J. Optim.}, 20(1):503--523, 2009.

\bibitem[CT06]{CT06}
Chek~Beng Chua and Levent Tun\c{c}el.
\newblock Invariance and efficiency of convex representations.
\newblock {\em Mathematical Programming}, 111(1-2):113--140, 2006.

\bibitem[Fay02]{FB02}
Leonid Faybusovich.
\newblock On {N}esterov's approach to semi-infinite programming.
\newblock {\em Acta Applicandae Mathematica}, 74(2):195--215, Nov 2002.

\bibitem[G{\aa}r51]{Gard51}
Lars G{\aa}rding.
\newblock Linear hyperbolic partial differential equations with constant
  coefficients.
\newblock {\em Acta Mathematica}, 85:1--62, 1951.

\bibitem[Gin92]{Gi92}
S.G. Gindikin.
\newblock {\em Tube Domains and the Cauchy Problem}, volume 111 of {\em
  Translations of mathematical monographs}.
\newblock American Mathematical Society, 1992.

\bibitem[GJSW84]{grone1984positive}
Robert Grone, Charles~R Johnson, Eduardo~M S{\'a}, and Henry Wolkowicz.
\newblock Positive definite completions of partial hermitian matrices.
\newblock {\em Linear algebra and its applications}, 58:109--124, 1984.

\bibitem[GT98]{GT98}
Osman G\"{u}ler and Levent Tun\c{c}el.
\newblock Characterization of the barrier parameter of homogeneous convex
  cones.
\newblock {\em Mathematical Programming}, 81(1):55–76, March 1998.

\bibitem[G{\"u}l97]{Gu97}
Osman G{\"u}ler.
\newblock Hyperbolic polynomials and interior point methods for convex
  programming.
\newblock {\em Mathematics of Operations Research}, 22(2):350--377, 1997.

\bibitem[HV07]{HV07}
J.~William Helton and Victor Vinnikov.
\newblock Linear matrix inequality representation of sets.
\newblock {\em Communications on Pure and Applied Mathematics}, 60(5):654--674,
  2007.

\bibitem[IL23]{IL23}
Masaru Ito and Bruno~F. Louren\c{c}o.
\newblock Automorphisms of rank-one generated hyperbolicity cones and their
  derivative relaxations.
\newblock {\em SIAM Journal on Applied Algebra and Geometry}, 7(1):236–263,
  March 2023.

\bibitem[Ish01]{Ishi01}
Hideyuki Ishi.
\newblock Basic relative invariants associated to homogeneous cones and
  applications.
\newblock {\em Journal of Lie Theory}, 11(1):155--171, 2001.

\bibitem[Ish13]{Ishi13}
Hideyuki Ishi.
\newblock On a class of homogeneous cones consisting of real symmetric
  matrices.
\newblock {\em Josai mathematical monographs}, 6:71--80, 2013.

\bibitem[KT23]{KT23}
Mehdi Karimi and Levent Tun\c{c}el.
\newblock Domain-driven solver ({DDS}) version 2.1: a {MATLAB}-based software
  package for convex optimization problems in domain-driven form.
\newblock {\em Mathematical Programming Computation}, 16(1):37--92, 2023.

\bibitem[LLLP23]{LLLP23}
Ying Lin, Scott~B. Lindstrom, Bruno~F. Lourenço, and Ting~Kei Pong.
\newblock Generalized power cones: optimal error bounds and automorphisms.
\newblock {\em ArXiv e-prints. To appear at the SIAM Journal on Optimization},
  2023.

\bibitem[Nak14]{Na14}
Hideto Nakashima.
\newblock Basic relative invariants of homogeneous cones.
\newblock {\em Journal of Lie Theory}, 24(4):1013--1032, 2014.

\bibitem[Nak18]{Na18}
Hideto Nakashima.
\newblock Basic relative invariants of homogeneous cones and their {L}aplace
  transforms.
\newblock {\em Journal of the Mathematical Society of Japan}, 70(1), January
  2018.

\bibitem[Nak24]{Na24}
Hideto Nakashima.
\newblock An example of homogeneous cones whose basic relative invariant has
  maximal degree.
\newblock {\em arXiv e-prints}, 2024.

\bibitem[PS24]{pfeffer2021cone}
Max Pfeffer and Jos\'{e}~Alejandro Samper.
\newblock The cone of $5\times 5$ completely positive matrices.
\newblock {\em Discrete \& Computational Geometry}, 71(2):442--466, January
  2024.

\bibitem[Ren06]{Re06}
James Renegar.
\newblock Hyperbolic programs, and their derivative relaxations.
\newblock {\em Foundations of Computational Mathematics}, 6(1):59--79, 2006.

\bibitem[Roc97]{RT97}
R.~T. Rockafellar.
\newblock {\em {Convex Analysis}}.
\newblock Princeton University Press, 1997.

\bibitem[Rot66]{Ro66}
O.~S. Rothaus.
\newblock The construction of homogeneous convex cones.
\newblock {\em Annals of Mathematics}, 83(2):358--376, 1966.
\newblock Correction: ibid 87, 399 (1968).

\bibitem[Sin15]{sinn2015algebraic}
Rainer Sinn.
\newblock Algebraic boundaries of convex semi-algebraic sets.
\newblock {\em Research in the Mathematical Sciences}, 2:1--18, 2015.

\bibitem[TV23]{TV23}
Levent Tun\c{c}el and Lieven Vandenberghe.
\newblock Linear optimization over homogeneous matrix cones.
\newblock {\em Acta Numerica}, 32:675–747, May 2023.

\bibitem[Vin63]{V63}
E.~B. Vinberg.
\newblock The theory of homogeneous convex cones.
\newblock {\em Trans. Moscow Math. Soc.}, 12:340--403, 1963.
\newblock (English Translation).

\end{thebibliography}

\end{document}